\newcommand{\MA}{\mathfrak{A}}
\newcommand{\ZZ}{\mathbb{Z}}
\newcommand{\CW}{\mathcal{W}}
\newcommand{\LLa}{\mbox{LlogL}}
\newcommand{\id}{I}
\newcommand{\Div}{\mbox{div}}
\newcommand{\DeltaA}{\Delta}
\newcounter{lil11}
\newcounter{lil22}
\newcommand{\baray}{\begin{array}{rcl}}
\newcommand{\earay}{\end{array}}
\newcommand{\barray}{\begin{array}{rcl}}
\newcommand{\earray}{\end{array}}
\newcommand\dela[1]{}
\newcommand{\bcase}{\begin{cases}}
\newcommand{\ecase}{\end{cases}}
\newcommand\del[1]{}
\newcommand\del[1]{}
\def\eps{\varepsilon}
\newcommand{\bu}{\bar u}
\newcommand{\bv}{\bar v}
\newcommand{\lk}{\left}
\newcommand{\lqq}{\lefteqn}
\newcommand{\rk}{\right}
\newcommand{\la}{{\langle}}
\newcommand{\ra}{{\rangle}}
\newcommand{\LL}{{\rm I \kern -0.2em L}}
\newcommand{\ep} {\varepsilon }
\newcommand{\be} {\begin{enumerate} }
\newcommand{\ee} {\end{enumerate} }
\newcommand{\CO}{{{ \mathcal O }}}
\newcommand{\CH}{{{ \mathcal H }}}
\newcommand{\BF}{{{ \mathbb{F} }}}
\newcommand{\CF}{{{ \mathcal F }}}
\newcommand{\RR}{{\mathbb{R}}}
\newcommand{\NN}{\mathbb{N}} 
\newcommand{\di}{\mbox{div}}
\newcommand{\PP}{{\mathbb{P}}}
\newcommand{\EE}{ \mathbb{E} }
\newcommand{\DEQS}{\begin{eqnarray*}}
\newcommand{\EEQS}{\end{eqnarray*}}
\newcommand{\DEQSZ}{\begin{eqnarray}}
\newcommand{\EEQSZ}{\end{eqnarray}}
\newcommand{\DEQ}{\begin{eqnarray}}
\newcommand{\EEQ}{\end{eqnarray}}
\newcounter{lil1q}
\newenvironment{steps}
{\begin{list} { \bf Step (\Roman{lil1q})}
{ \usecounter{lil1q}
\setlength{\leftmargin}{0.0cm}
\setlength{\topsep}{0.2cm}
\setlength{\itemsep}{0.0cm}
\setlength{\parsep}{0.1cm}
\setlength{\itemindent}{0.8cm}
\setlength{\parskip}{0.0cm}}}
{\end{list}}
\theoremstyle{plain}
\newtheorem{theorem}{Theorem}[section]
\newtheorem{notation}{Notation}[section]
\newtheorem{claim}{Claim}[section]
\newtheorem{lemma}[theorem]{Lemma}
\newtheorem{corollary}[theorem]{Corollary}
\newtheorem{hypo}[theorem]{Assumption}
\newtheorem{definition}[theorem]{Definition}
\newtheorem{remark}[theorem]{Remark}
\newtheorem{proposition}[theorem]{Proposition}
\numberwithin{equation}{section}
\numberwithin{equation}{section} \allowdisplaybreaks
\begin{document}

\title[Uniqueness of 1d stochastic Keller--Segel model]{
Uniqueness of the stochastic Keller--Segel model in one dimension}

\author{Erika Hausenblas}
   \address{%
   Department of Mathematics,
	Montanuniversitaet Leoben,
	Austria.}
\email{erika.hausenblas@unileoben.ac.at}

\author[Debopriya Mukherjee]{Debopriya Mukherjee}

\address{%
   Department of Mathematics,
	Montanuniversitaet Leoben,
	Austria.}
\email{debopriya.mukherjee@unileoben.ac.at}

\author[Thanh Tran]{Thanh Tran}
\address{%
School of Mathematics and Statistics,
The University of New South Wales,
Sydney, Australia.}

\date{\today}
\thanks{The first author of the paper is supported by Austrian Science
	Foundation, project number P 32295. The second author is supported by
	Marie Sk{\l}odowska-Curie Individual Fellowships H2020-MSCA-IF-2020,
	888255. The third author is partially supported by Australian Research
	Council, Discovery Project grant DP200101866.
}

\begin{abstract}
	In a recent paper (J. Differential Equations, \textbf{310}: 506--554, 2022), the authors proved the existence of martingale	solutions to a stochastic version of the classical Patlak--Keller--Segel
	system in 1 dimension (1D), driven by time-homogeneous spatial Wiener
processes. The current paper is a continuation and consists of two
results about the stochastic Patlak--Keller--Segel system in 1D. First, we
establish some additional regularity results of the solutions. The additional regularity is, e.g. important for its numerical modelling. 
Then, as a second result, we obtain the pathwise uniqueness of the solutions to the stochastic Patlak--Keller--Segel system in 1D. Finally, we conclude the paper with the existence of strong solution to this system in 1D.
\end{abstract}

\maketitle

\textbf{Keywords and phrases:} {Chemotaxis, Keller--Segel model, Stochastic Partial Differential Equations, Stochastic Analysis, Mathematical Biology, pathwise uniqueness.  }

\textbf{AMS subject classification (2010):} {Primary 60H15, 92C17,  35A01;
Secondary 35B65, 35K87, 35K51, 35Q92.}

\section{Introduction}
This paper is a continuation of \cite{EH+DM+TT_2019}, where the authors proved
the existence of martingale solutions to a stochastic version of the classical Patlak--Keller--Segel system driven by time-homogeneous spatial Wiener processes.
 The existence of solutions is shown in the weak probabilistic sense. We start with obtaining some uniform bounds on the higher moments of the solution. Then, we aim to establish the existence of a unique, strong solution of the stochastic version of the classical Patlak--Keller--Segel system driven by time-homogeneous spatial Wiener processes in one dimension. 


In the proof of the existence of a solution, compactness arguments are employed. Doing so, the underlying probability space {gets} lost, and one needs to introduce a concept of probabilistic
weak solutions, i.e.\ martingale solutions (see Definition 2.2 in \cite{EH+DM+TT_2019}).
This paper aims to give sufficient conditions to ensure pathwise uniqueness of
the solutions to the system \eqref{chemonoisestrat}. By the Yamada--Watanabe
Theorem, it follows that a global solution exists on every stochastic  basis and is unique.
We proceed first by proving pathwise uniqueness of the solutions, and then we
apply a result of Yamada--Watanabe--Kurtz to show that these solutions are strong and unique in law.
\par
The Yamada--Watanabe theory has been well developed for stochastic equations under the influence of Wiener noise, see e.g.\ \cite{cherny,engelbert,jacod,martin1,tappe,qiao,yamada}, even so, only a few results are dealing with the variational setting, see \cite{roeckner_uniq}.

\noindent
\textbf{Problem description:}
Let $\mathfrak{A}=(\Omega, \CF,\BF,\PP)$ be a complete probability space equipped with a filtration $\BF=(\CF_t)_{t\ge 0}$  satisfying the usual
condition.
Let $\CH_1$ and $\CH_2$ be some Bessel potential spaces to be specified
later.
Let $\mathcal W_1$ and $\mathcal W_2$ be two cylindrical Wiener processes defined over
$\mathfrak{A}$ on Hilbert spaces $\CH_1$ and $\CH_2$, respectively.
 Let us define the Laplacian $\Delta:=\dfrac{\partial^2}{\partial x^2}$ with the Neumann boundary conditions given by
\begin{equation}
\label{eqn:4.3} \left\{
\begin{array}{ll}
D(A) &:= \{ u \in H^2(0,1):u_x(0)=u_x(1)=0 \},\cr
A u&:=\Delta u= u_{xx}, \quad u \in D(A).
\end{array}
\right.
\end{equation}
In this paper, we consider the following equation
\begin{align}\label{chemonoisestrat}
\lk\{ \barray
& d {u} - \lk( r_u\Delta   u- \chi \Div( u\nabla v) \rk)\, dt  =  u\circ  d \mathcal W_1, 
\\
 &d{v} -(r_v \Delta v  -\alpha v)\, dt = \beta u \, dt + v \circ d \mathcal{W}_2,
\earray\rk.
\end{align}
where  $\CO=[0,1]$, $\beta\ge0$ is the production weight corresponding to $u$.
The positive terms $r_u$ and $r_v$  are the diffusivity of the cells and
chemoattractant, respectively. 
The positive value $\chi$ is the chemotactic sensitivity, $\alpha\ge0$ is the so-called damping constant, $\beta\ge0$ is the production weight corresponding to $u$. The initial conditions are given by $u(0)=u_0$ 
and $v(0)=v_0$.
Since we model an intrinsic noise,  the stochastic integral above is interpreted in
the Stratonovich sense denoted by $\circ$. 
\del{To introduce the underlying Wiener processes, for $i=1,2$, let  $\{\psi^i_k:k\in\mathbb{N}\}$ be the eigenfunctions of $-\Delta$ with Neumann boundary conditions and $\{\beta^i_k:k\in\mathbb{N}\}$ be a family of mutually independent identically distributed standard Brownian motions.~The Wiener processes $\mathcal W_i$; $i=1,2$, are
given by
\begin{align}\label{def Wi}
\mathcal W_i(t,x)=\sum_{k\in\mathbb{N}} \lambda_k \beta^{(i)}_k(t) \psi^i_k(x), \quad x \in \CO,\,\, t >0,
\end{align}
where  $\{\lambda_k:k\in\mathbb{N}\}\in l^2(\NN)$,  is a non--negative sequence. In addition, let $\gamma=\sum_{k\in\NN}\,\lambda_k^2$.
%
}
\par
An important limitation of the Stratonovitch stochastic integral is that the
Stratonovitch integral is not a martingale and the Burkholder--Davis--Gundy
inequality does not hold here. Consequently, it is more appropriate to work the
equation in the It\^o form. Following the technical analysis involved in the
conversion between the It\^o and Stratonovich form, one can see that
the system \eqref{chemonoisestrat} is equivalent to the following system:
\DEQSZ\label{chemonoise}
\lk\{ \barray
 d {u} & -& \lk( r_u\Delta   u- \chi \Div( u\nabla v)\rk)\, dt  = u  d\mathcal W_1+ \gamma u \,dt, 
\\
d{v}& -&(r_v \Delta v  -\alpha v)\, dt = \beta u \, dt + v d\mathcal{W}_2,
\earray\rk.
\EEQSZ
where $\gamma=\gamma_1 \leq \sum_{k=1}^{\infty}(\lambda_k^1)^2$ (see Remark \ref{rem wiener}).
For further details regarding the form of the correction
term and the technical analysis involved in the conversion, we refer to
\cite{WongZakai}, \cite[p. 65, Section 4.5.1]{Duan+Wang}.

%
%
%
For completeness we add the definition of strong solutions to system \eqref{chemonoisestrat}. For a Banach space~$E$, the space $C_b^{0}([0,T];E)$ is the set of all continuous and bounded functions $u:[0,T]\to E$.
\begin{definition}
We call a pair of processes $(u,v)$ a strong solution to system \eqref{chemonoisestrat} if $u:[0,T] \times \Omega \rightarrow L^2(\CO)$ and $v:[0,T] \times \Omega \rightarrow
H^1_2(\CO)$
 are $\BF$--progressively  measurable processes\footnote{The process
 $\xi:[0,T]\times \Omega\to \mathbb{X}$ is said to be progressively measurable
over a probability space $(\Omega,\CF,(\CF_t)_{t\in[0,T]},\PP)$ if, for every
time $t\in[0,T]$, the map $ [0,t]\times \Omega \to \mathbb {X} $ defined by
$(s,\omega )\mapsto \xi_{s}(\omega )$ is $\mathcal {B} ([0,t])\otimes
{\mathcal {F}}_{t}$-measurable. This implies that $\xi$ is
$(\CF_t)_{t\in[0,T]}$-adapted.} such that $\PP$-a.s.\ $(u,v)\in C_b^0([0,T];L^2(\CO)) \times C_b^0([0,T];H^1_2(\CO))$ and satisfy for all $t\in [0,T]$ and $\PP$--a.s. the integral equation
\begin{align*} 
u(t)
& =e^{t \,r_u\DeltaA} u_0
-\chi\int_0^ t e^{(t-s)r_u\DeltaA}  \Div( u (s)\,\nabla v(s)) \, ds
%
+\int_0^ t e^{(t-s)r_u\DeltaA} u(s) \circ d\mathcal W_1(s),\\
v(t)
& =e^{t(r_v\DeltaA-\alpha I)} v_0
+\beta \int_0^ t e^{(t-s)(r_v\DeltaA-\alpha I)} u(s)\, ds + 
\int_0^ t e^{(t-s)(r_v \DeltaA-\alpha I)} v(s)\circ  d\mathcal W_2(s).
\end{align*}
\end{definition}
%
%
%

Let us consider the complete orthonormal system of the underlying Lebesgue space~$L^2(\CO)$  given by the trigonometric functions (see \cite[p.\ 352]{Birkhoff+Rota}) 
\begin{equation}\label{ONS}
\psi_k(x)=
\begin{cases}
{\sqrt{2}} \, \sin\big(2\pi{k} x\big) &\!\!\text{if } k\geq 1,\,x\in\CO, \\
{1} &\!\!\text{if } k = 0,\, x\in\CO, \\
{\sqrt{2}}\, \cos\big(2 \pi{k} x\big) &\!\!\text{if } k \leq - 1, x\in\CO.
\end{cases}
\end{equation}
For the proof of the existence of the solution, the Wiener
perturbation needs to satisfy regularity assumptions given in the next hypotheses.
\begin{hypo}\label{wiener}
Let $\CH_1$ and $\CH_2$ be two Bessel potential spaces such that
the embeddings  $\iota_1:\CH_1 \hookrightarrow L^2(\CO)$ and $\iota_2:\CH_2\hookrightarrow
H^1_2(\CO)$ are  Hilbert--Schmidt operators. The Wiener processes $\mathcal W_1$ and $\mathcal W_2$ are two cylindrical processes on  $\CH_1$ and $\CH_2$.
 \end{hypo}

\begin{remark}\label{rem wiener}
%
As an example we can take $\CH_1=H^{\delta_1}(\CO)$ for $\delta_1>1$, and
$\CH_2=H^{\delta_2}(\CO)$ for $\delta_2>2$ where $H^{\delta_i}(\CO)$, $i=1,2$, is to be
defined in Notation~\ref{Hdelta}. The Wiener processes $\mathcal W_1$ and $\mathcal W_2$  are
then given by
\[
\mathcal W_1(t,x)=\sum_{k\in\mathbb{Z}}
\psi^{(\delta_1)} _k(x)\beta^{(1)}_k(t) \quad\text{and}\quad
\mathcal W_2(t,x)=\sum_{k\in\mathbb{Z}}
\psi^{(\delta_2)} _k(x)\beta^{(2)}_k(t)
\]
where $\psi_k^{(\delta_i)}:=(1+({2\pi k})^2)^{-\delta_i/2}\psi_k$, $i=1,2$, with $\psi_k$ defined by
\eqref{ONS}. Here~$\{\beta^{(i)}_k:k\in\mathbb{Z}\}$, $i=1,2,$ are two families of mutually
independent identically distributed standard Brownian motions. We note
that~$\{\psi^{(\delta_i)}_k:k\in\mathbb{Z}\}$ forms an  orthonormal system in
$H^{\delta_i}(\CO)$, $i=1,2$. We will also write
\begin{align}\label{def Wi}
\mathcal W_i(t,x)=\sum_{k\in\mathbb{Z}} \lambda^{(i)}_k
\psi _k(x)\beta^{(i)}_k(t),\quad i=1,2,
\end{align}
where, for~$i=1,2$, the sequence $\{\lambda^{(i)}_k:k\in\mathbb{Z}\}$, $\lambda^{(i)}_k:=(1+\mu_k)^{-\delta_i/2}=(1+{(2\pi k)^2})^{-\delta_i/2}$, is non-negative with $\lambda^{(i)}_k=\lambda^{(i)}_{-k}$ for
all $k\in\ZZ$,
where $\mu_k=(2\pi k)^2$ are the corresponding eigenvalues of $-\Delta$ with Neumann boundary conditions.
Since we will need it later on,  for each $i=1,2$, let us define the constant
\DEQSZ \label{defofgamma}
\gamma_i:= \sum_{k \in \NN} |\lambda_k^i|^2 |\psi_k|_{L^\infty}^2 \leq \sum_{k\in\ZZ}\,(\lambda^{(i)}_k)^2.
\EEQSZ 
\end{remark}

Since the solution $(u,v)$ represents the cell density and concentration of the chemical signal,
$u$ and $v$ are non-negative. This implies that the initial conditions $u_0$ and
$v_0$ are non-negative. Besides, one needs to impose some more regularity assumptions on $u_0$
and $v_0.$ 
\begin{hypo}\label{init}
Let $u_0\in L^ 2 (\CO)$ and $v_0\in H^ 1_2 (\CO)$ be two random variables over $\mathfrak{A}$ such that
\begin{enumerate}[label=(\alph*)]
  \item $u_0\ge 0$ and $v_0\ge 0$;
  \item $(u_0,v_0)$ is  $\CF_0$--measurable;
  \item  $\EE\left[|u_0|_{L^2}^2\right]<\infty$ and $\EE\left[|\nabla v_0|_{L^2}^2\right]<\infty$.
\end{enumerate}
\end{hypo}

\begin{notation}\label{Hdelta}
Let $A_1$ be the positive Laplace operator~$-\Delta$ (as an operator defined on~$L^2(\RR)$)
restricted to functions defined on~$\CO$, namely,
\[
A_1 :=-\Delta, \quad
D(A_1):= H^2_2(\CO)\cap H^1_0(\CO),
\]
and let $(\psi_j,\rho_j)$ be the eigenfunctions and eigenvalues of $A_1$.
The  Bessel potential space $H^\kappa(\CO)$ is defined by
\[
H^\kappa(\CO)=\Big\{u=\sum_{j}a_j \psi_j \in L^2(\CO):\|u\|_{H^{\kappa}(\CO)}
=\Big(\sum_{j}a_j^{2} \rho_j^{2\kappa}\Big)^{1/2}<\infty
\Big\}.
\]
\end{notation}
\begin{notation}
Let us  define by
$\LLa(\CO)$ the Zygmund space (see  \cite[Definition 6.1, p.\ 243]{bennet}) consisting of all Lebesgue-measurable functions $f:\CO\to \RR$ for which $\int_\CO|f(x)|\log(|f(x)|)^+\, dx<\infty$. This space is equipped with the norm
\[|f|_{\LLa}:= \int_0^1 f^{\ast}(t) \log(\dfrac{1}{t})dt,\]
where $f^{\ast}$ is defined by
\[ f^{\ast}(t)=\inf\{\lambda: \delta_f(\lambda)\leq t  \}, \quad t \geq 0.
\]
Here $\delta_f(\lambda)$ is the Lebesgue measure of the set $\{x \in \CO: |f(x)| > \lambda \}, \,\, \lambda \geq 0.$ We note that $f \in \LLa$ iff $\int_{\CO} |f(x)|\log (2+|f(x)|)\,dx < \infty$ (see p. 252 of \cite{bennet}). Note also that by Theorem 6.5 p.\ 247 in \cite{bennet} we have that $\LLa(\CO) \hookrightarrow L^1(\CO)$.
\end{notation}
Let us briefly describe the content of this paper.  In Section \ref{add reg}, we
formulate a proposition to provide some uniform bounds on $u$ and $v$ in such a
way that we can control the $L^{2}(\Omega;L^{\infty}(0,T;L^{2}(\CO))$
norm of $u$ and the $L^{2}(\Omega;L^\infty(0,T;H^{1}_2(\CO))$ norm of $v$.
We postpone the proof of this Proposition to Section \ref{add reg proof}.
In Section \ref{pathwise}, we deduce that if there exist two solutions on the
same probability space, then both the solutions are identical.
Here, we prove our main result on the existence of a unique strong solution to the system \eqref{chemonoisestrat} which we reformulated as
Corollary \ref{main_example}.  In the Appendix, we collect some elementary results which are needed in the course of analysis.
\section{Additional regularity}\label{add reg}
\newcommand{\bunn}{\bar{u}_n}
\newcommand{\bvnn}{\bar{v}_n}
In this section, we formulate the following proposition to obtain some
uniform bounds on $u$ and $v$ in such a way that we can control the
$L^{2}(\Omega;L^{\infty}(0,T;L^{2}(\CO))$ norm of $u$ and the
$L^{2}(\Omega;L^\infty(0,T;H^{1}_2(\CO))$ norm of $v$. 
While carrying out this formulation, in order to handle the non-linear
term $u \nabla v$, it is essential to obtain the bound for the $p$-th moment of
$H^1_2(\CO)$ norm of $v$ for $p>1$ i.e.,
\[
	v \in L^p(\Omega;L^p(0,T;H^1_2(\CO))).
\] 
To do so, we will apply Theorem 4.5 in \cite{vanNeerven1}. Consequently, we will end up working in the real interpolation space 
\begin{align*}
\big(H^1_2(\CO), H^{-1}_2(\CO)\big)_{1-\frac{1}{p},p}= {B^{1-\frac{2}{p}}_{2,p}}(\CO),
\end{align*}
which is a Besov space; see \cite[p.\ 1406]{vanNeerven1}.
\begin{proposition}\label{uniformboundtaun}
Let $T>0$ and $\mathfrak{A}=(\Omega,\CF,(\CF_t)_{t\in[0,T]},\PP)$ be a
probability space satisfying the usual conditions. Also, let $(\mathcal
	W_1, \mathcal W_2)$ be a pair of Wiener processes over $\MA$, and
$\CH_1$ and $\CH_2$ be two Bessel potential spaces which satisfy Assumption \ref{wiener}. Let
the initial data $(u_0,v_0)$ satisfy Assumption \ref{init} with
$\EE|u_0|_{L^1}^{12}< \infty$ and $\EE|v_0|^{12}_{B_{2,12}^{{5}/{6}}} < \infty$. If $(u,v)$ is a martingale solution  solution to the system \eqref{chemonoise}, then, there exist positive constants $a_0$ and $a_1$ such that we have the following inequality
\begin{align}
& \frac 14 \,\EE \Big[\sup_{0 \leq s \leq t  }|u(s)|^2_{L^{2}} \Big]
+ \frac 14 \,\EE \Big[\sup_{0 \leq s \leq t }|\nabla v(s)|^2_{L^{2}} \Big]
 + \frac{r_u}{4}\,\EE\Big[ \int_0^{t }|\nabla u(s)|^2_{L^{2}}\, ds \Big]
\notag\\
& \quad
 + \frac{r_v}{2}\, \EE\Big[ \int_0^ {t }\int_{\CO} |\Delta v(s,x)|^2\,dx\, ds\Big]
  + \alpha \EE\Big[ \int_0^ {t }\int_{\CO} |\nabla v(s,x)|^2\,dx\, ds\Big]
 \notag\\
&
 \leq a_0 \Big( \EE|u_0|_{L^2}^2
+ \EE|\nabla v_0|_{L^2}^2
 + T\, \EE|u_0|_{L^1}^8 e^{(\gamma^8 + \frac{1}{2}) T^8}
 + T \EE|v_0|^{12}_{B_{2,12}^{\frac{5}{6}}}
 \notag\\
& \quad + T^2 \EE |v_0|_{L^1}^{12} e^{(\gamma^{12} + \frac{1}{2}) T^{12}}\Big) e^{a_1 T}.
\end{align}
\end{proposition}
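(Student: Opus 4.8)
The plan is to establish the inequality first for the finite-dimensional Galerkin approximations $(\bunn,\bvnn)$ used in the construction of the martingale solution in \cite{EH+DM+TT_2019}, where It\^o's formula may be applied rigorously, to obtain bounds uniform in $n$, and then to pass to the limit using the weak lower semicontinuity of the norms on the left-hand side together with Fatou's lemma. Since the system \eqref{chemonoise} is already in It\^o form, the central object is the evolution of the energy $\tfrac12|u(s)|_{L^2}^2+\tfrac12|\nabla v(s)|_{L^2}^2$, and the whole estimate is obtained by adding the It\^o expansions of its two summands, controlling the resulting terms, and closing the bound by a Gronwall argument.

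First I would apply It\^o's formula to $\tfrac12|u|_{L^2}^2$. Using the Neumann boundary conditions and integration by parts, the diffusion produces the dissipation $-r_u|\nabla u|_{L^2}^2$; the correction drift $\gamma u$ together with the quadratic variation of $u\,d\mathcal W_1$ combine, thanks to the identity $\sum_k(\lambda^{(1)}_k)^2\psi_k^2\equiv\mathrm{const}$ for the trigonometric system \eqref{ONS}, into a term of order $\gamma_1|u|_{L^2}^2$; and the chemotactic term yields, after one integration by parts, $\chi\langle\nabla u,u\nabla v\rangle$. In parallel I would apply It\^o's formula to $\tfrac12|\nabla v|_{L^2}^2$, equivalently testing the $v$-equation against $-\DeltaA v$: this gives the dissipation $-r_v|\DeltaA v|_{L^2}^2$, the damping $-\alpha|\nabla v|_{L^2}^2$, the coupling $-\beta\langle\DeltaA v,u\rangle$, and a correction of order $\gamma_2|\nabla v|_{L^2}^2$. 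The coupling is harmless: by Young's inequality $|\beta\langle\DeltaA v,u\rangle|\le\tfrac{r_v}{4}|\DeltaA v|_{L^2}^2+C\beta^2|u|_{L^2}^2$, the first term being reabsorbed into the $v$-dissipation and the second carried along for Gronwall.

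The heart of the matter, and the main obstacle, is the chemotactic term $\chi\langle\nabla u,u\nabla v\rangle$. In one space dimension I would use the Gagliardo--Nirenberg inequality $|u|_{L^\infty}\le C|u|_{L^2}^{1/2}|u|_{H^1}^{1/2}$ to bound $|u\nabla v|_{L^2}\le C|u|_{L^2}^{1/2}|u|_{H^1}^{1/2}|\nabla v|_{L^2}$, and then Young's inequality to split off $\tfrac{r_u}{4}|\nabla u|_{L^2}^2$, leaving a super-quadratic remainder of the schematic form $C|u|_{L^2}^2|\nabla v|_{L^2}^4$. This quartic factor in $\nabla v$ cannot be absorbed by the energy dissipation, and this is precisely where the regularity input is needed: I would treat $C|\nabla v(s)|_{L^2}^4$ as a time-dependent, a.s.\ integrable coefficient, controlled by the maximal-regularity bound $v\in L^{12}(\Omega;L^{12}(0,T;H^1_2(\CO)))$ obtained from Theorem~4.5 of \cite{vanNeerven1} in the interpolation space $B^{5/6}_{2,12}(\CO)$ (note $1-\tfrac2p=\tfrac56$ for $p=12$). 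This is exactly the step that forces the hypotheses $\EE|v_0|^{12}_{B^{5/6}_{2,12}}<\infty$ and the twelfth $L^1$-moment bounds: the forcing $\beta u$ in the $v$-equation is controlled through the total mass $\int_\CO u\,dx$, which solves the linear geometric SDE $d\big(\textstyle\int_\CO u\big)=\gamma\big(\int_\CO u\big)\,dt+\int_\CO u\,d\mathcal W_1$, yielding moment bounds of the form $\sup_{s}\EE|u(s)|_{L^1}^{q}\le\EE|u_0|_{L^1}^{q}\,e^{cT}$, and similarly for $v$; it is the combination of these moments through H\"older's inequality that produces the powers $8$ and $12$ and the exponential factors appearing in the statement.

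Finally, after summing the two It\^o identities, I would take the supremum over $s\in[0,t]$ and apply the Burkholder--Davis--Gundy inequality to the two martingale terms arising from $u\,d\mathcal W_1$ and from $v\,d\mathcal W_2$; each is estimated by $\tfrac14\EE\sup_{s}|u|_{L^2}^2$ (respectively $\tfrac14\EE\sup_{s}|\nabla v|_{L^2}^2$) plus a time integral of lower-order energy, the first halves being reabsorbed into the left-hand side. What remains is a Gronwall inequality whose integrating factor combines the $e^{a_1T}$ from the linear terms with the exponential factors coming from the mass SDE; applying it, using the $L^{12}$-maximal-regularity bound to guarantee integrability of the quartic coefficient, and passing to the limit $n\to\infty$, yields the asserted estimate.
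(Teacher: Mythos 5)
Your overall skeleton—It\^o's formula for $\tfrac12|u|_{L^2}^2$ and $\tfrac12|\nabla v|_{L^2}^2$, the mass SDE $d\big(\int_\CO u\,dx\big)=\gamma\big(\int_\CO u\,dx\big)\,dt+\int_\CO u\,d\mathcal W_1$ for the $L^1$-moment bounds, maximal regularity (Theorem 4.5 of \cite{vanNeerven1}) for the higher moments of $v$ in $H^1_2$, BDG for the martingale parts, and a final Gronwall argument—coincides with the paper's. However, your treatment of the chemotactic term, which is the heart of the proof, contains a genuine gap. You bound $|u\nabla v|_{L^2}\le|u|_{L^\infty}|\nabla v|_{L^2}\le C|u|_{L^2}^{1/2}|u|_{H^1}^{1/2}|\nabla v|_{L^2}$ and, after Young's inequality, are left with the remainder $C|u|_{L^2}^2|\nabla v|_{L^2}^4$, which you propose to treat as a time-dependent random coefficient of the energy in a Gronwall argument, "controlled" by $v\in L^{12}(\Omega;L^{12}(0,T;H^1_2(\CO)))$. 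This does not close: a pathwise Gronwall inequality produces the factor $\exp\big(C\int_0^T|\nabla v(s)|_{L^2}^4\,ds\big)$, so the expectation bound asserted in the proposition—with deterministic constants $a_0$, $a_1$—would require \emph{exponential} moments of $\int_0^T|\nabla v|_{L^2}^4\,ds$, which twelfth polynomial moments do not provide. No version of the stochastic Gronwall lemma converts an a.s.\ finite random coefficient into an $\EE\sup$ bound of this form (one obtains at best $L^p$ control for $p<1$). In other words, your splitting makes the supercritical part of the nonlinearity multiplicative against the energy, which is precisely the situation that must be avoided.

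The paper's splitting is different in exactly this respect: it interpolates $u$ against its $L^1$ norm rather than its $L^2$ norm, namely $|u\nabla v|_{L^2}^2\le|u|_{L^4}^2|\nabla v|_{L^4}^2\le C|u|_{H^1_2}|u|_{L^1}|v|_{H^2_2}^{1/2}|v|_{H^1_2}^{3/2}$, and then Young's inequality yields $\eps\big(|u|_{H^1_2}^2+|v|_{H^2_2}^2\big)$, absorbed by the dissipation on the left, plus the \emph{additive} forcing $C|u|_{L^1}^8+C|v|_{H^1_2}^{12}$. The bad terms thus enter as additive quantities whose expectations are finite—by the mass SDE (your Step with $p=8$) and by the maximal-regularity estimate with $p=12$ (whence the space $B^{5/6}_{2,12}$, since $1-\tfrac2{12}=\tfrac56$)—and the Gronwall argument runs with purely deterministic coefficients. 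This is where the powers $8$ and $12$ actually come from: Young's inequality applied to the interpolated nonlinear term, not a H\"older combination of moments as you suggest. A secondary issue: you prove the estimate for Galerkin approximations and pass to the limit, which establishes it only for the solution so constructed; the proposition is stated for an arbitrary martingale solution and is applied in the uniqueness proof to both solutions $u_1,u_2$, so the paper instead regularizes the operators (Yosida approximation of $\Delta$ and a mollified divergence) and applies It\^o's formula to the given solution directly.
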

We postpone the proof of this proposition to Section \ref{add reg proof}.
 

%
%
\section{Pathwise uniqueness of the solution}\label{pathwise}
In this section, we prove pathwise uniqueness of the solution to the system \eqref{chemonoise}.
Thanks to 
Yamada--Watanabe theorem, 
(weak) existence and pathwise uniqueness of the solution of a stochastic equation guarantee the existence of a strong solution.
Existence of a martingale solution to the given system is obtained in Theorem 2.6 of \cite{EH+DM+TT_2019}.
Accordingly, our aim is to obtain the pathwise uniqueness to show the existence of a unique, strong solution to the system \eqref{chemonoise} in one dimension.
In the following theorem, we will find under which conditions pathwise uniqueness holds.
For the sake of the completeness we  start with the definition of pathwise uniqueness.
\begin{definition} The equation \eqref{chemonoise} is pathwise unique if, whenever $(\Omega, \CF,\BF,\PP,(u_i,v_i),\mathcal W_1, \mathcal W_2)$, $i=1,2$, are solutions to \eqref{chemonoise} such that
$\PP(u_1(0)=u_2(0))=1$ and $\PP(v_1(0)=v_2(0))=1$, then
\[
\PP(u_1(t)=u_2(t))=1 \quad \mbox{and} \quad
\PP(v_1(t)=v_2(t))=1,
 \quad \mbox{for every } t \in [0,T].
\]
\end{definition}
\begin{theorem}\label{thm path uniq}
Let $T>0$ and $\mathfrak{A}=(\Omega,\CF,(\CF_t)_{t\in[0,T]},\PP)$ be a probability space satisfying the usual conditions. Also, let $(\mathcal W_1, \mathcal W_2)$ be a pair of Wiener processes over $\MA$, and
$\CH_1$ and $\CH_2$ be two Bessel potential spaces which satisfy Assumption \ref{wiener}.
Let the initial data  $(u_0,v_0) \in L^{2}(\CO) \times H^1_{2}(\CO)$
satisfy Assumption \ref{init}.
Let $(u_i,v_i)$; $i=1,2$, be two solutions to the system \eqref{chemonoise}
such that $\PP$--a.s. the following holds
\begin{align}\label{uL2 vL2}
\begin{cases}
u \in \mathbb{X}_u 
& \text{where } \mathbb{X}_u := 
L^{2}(\Omega;C([0,T];L^{2}(\CO))) 
\cap L^{2}(\Omega;L^{2}(0,T ;H^{1}_2( \CO))),
\\
v \in \mathbb{X}_v 
& \text{where } \mathbb{X}_v :=
L^{2}(\Omega;C([0,T];H^1_2(\CO))) \cap
L^{2}(\Omega;L^{2}(0,T;H^{2}_2(\CO))).
\end{cases}
\end{align}
 Then, the processes $(u_1,v_1)$ and $(u_2,v_2)$ are identical in $\mathbb{X}_u\times\mathbb{X}_v$.
\end{theorem}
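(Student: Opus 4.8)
The plan is to set up an energy estimate for the difference of the two solutions and to close it by a stochastic Gronwall argument, the whole point being that in one space dimension the chemotactic nonlinearity can be controlled by the dissipation at the cost of a merely \emph{time-integrable} (rather than bounded) random coefficient. Write $U:=u_1-u_2$ and $V:=v_1-v_2$, so that $U(0)=V(0)=0$ $\PP$-a.s. Subtracting the two copies of \eqref{chemonoise} and using $u_1\nabla v_1-u_2\nabla v_2=U\nabla v_1+u_2\nabla V$, the pair $(U,V)$ solves
\begin{align*}
dU &= \big(r_u\Delta U-\chi\Div(U\nabla v_1+u_2\nabla V)+\gamma U\big)\,dt+U\,d\mathcal{W}_1,\\
dV &= \big(r_v\Delta V-\alpha V+\beta U\big)\,dt+V\,d\mathcal{W}_2.
\end{align*}
Because the noise enters linearly, the multiplicative terms of the difference are again $U\,d\mathcal{W}_1$ and $V\,d\mathcal{W}_2$; this is what makes the scheme work. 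I would first localise by a stopping time so that all stochastic integrals below are genuine martingales, carry out the estimates, and remove the localisation at the end.

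Next I would apply the It\^o formula in the variational framework (Gelfand triples $H^1_2\hookrightarrow L^2\hookrightarrow H^{-1}_2$ for $U$, and $H^2_2\hookrightarrow H^1_2\hookrightarrow L^2$ for $V$) to $|U(t)|^2_{L^2}$ and to $|V(t)|^2_{H^1_2}$; the regularity $(u_i,v_i)\in\mathbb{X}_u\times\mathbb{X}_v$ assumed in \eqref{uL2 vL2} is exactly what is needed to justify this. For $|U|^2_{L^2}$, integrating the divergence term by parts (the Neumann condition removes the boundary terms) produces
\[
-2\chi\big\langle U,\Div(U\nabla v_1+u_2\nabla V)\big\rangle
=2\chi\int_\CO U\,U_x\,(v_1)_x\,dx+2\chi\int_\CO u_2\,U_x\,V_x\,dx .
\]
Here dimension one is decisive: the embeddings $H^1_2(\CO)\hookrightarrow L^\infty(\CO)$ and $H^2_2(\CO)\hookrightarrow W^{1,\infty}(\CO)$ give $|(v_1)_x|_{L^\infty}\le C|v_1|_{H^2_2}$ and $|u_2|_{L^\infty}\le C|u_2|_{H^1_2}$, so Young's inequality yields
\[
2\chi\int_\CO U U_x (v_1)_x\,dx\le\tfrac{r_u}{4}|U_x|^2_{L^2}+C|v_1|^2_{H^2_2}|U|^2_{L^2},\quad
2\chi\int_\CO u_2 U_x V_x\,dx\le\tfrac{r_u}{4}|U_x|^2_{L^2}+C|u_2|^2_{H^1_2}|\nabla V|^2_{L^2}.
\]
The gradient contributions are absorbed into the dissipation $-2r_u|U_x|^2_{L^2}$, and the surviving coefficients $|v_1|^2_{H^2_2}$ and $|u_2|^2_{H^1_2}$ are $\PP$-a.s. in $L^1(0,T)$ precisely because $v_1\in L^2(0,T;H^2_2)$ and $u_2\in L^2(0,T;H^1_2)$. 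The zeroth-order terms coming from $\gamma U$ and from the It\^o correction of $U\,d\mathcal{W}_1$ are bounded by $C|U|^2_{L^2}$ via \eqref{defofgamma}.

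For $|V|^2_{H^1_2}$ I would apply It\^o to $|V|^2_{L^2}$ and to $|\nabla V|^2_{L^2}$ separately and add them. Testing the drift against $-\Delta V$, the damping gives the good term $-2\alpha|\nabla V|^2_{L^2}\le0$, while the coupling is split by Young's inequality as $-2\beta\langle\Delta V,U\rangle\le r_v|\Delta V|^2_{L^2}+C|U|^2_{L^2}$, the first half being absorbed into $-2r_v|\Delta V|^2_{L^2}$. The delicate point in this step is the second-order It\^o correction obtained from testing the noise $V\,d\mathcal{W}_2$ against $\Delta V$: it equals $\sum_k(\lambda^{(2)}_k)^2|\nabla(\psi_k V)|^2_{L^2}$, and its finiteness together with the bound $\le C(|V|^2_{L^2}+|\nabla V|^2_{L^2})$ rests on the Hilbert--Schmidt hypothesis $\iota_2:\CH_2\hookrightarrow H^1_2$ in Assumption \ref{wiener} (which makes $\sum_k(\lambda^{(2)}_k)^2|\psi_k'|^2_{L^\infty}$ converge).

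Finally, setting $Y_t:=|U(t)|^2_{L^2}+|V(t)|^2_{H^1_2}$ and $g(t):=C\big(1+|v_1(t)|^2_{H^2_2}+|u_2(t)|^2_{H^1_2}\big)$, which satisfies $\int_0^T g\,dt<\infty$ $\PP$-a.s., the three It\^o identities combine (after discarding the non-negative dissipation) into the pathwise differential inequality
\[
dY_t\le g(t)\,Y_t\,dt+dM_t,\qquad Y_0=0,
\]
with $M_t$ a (local) martingale. Multiplying by the positive weight $G_t:=\exp(-\int_0^t g\,ds)$ and using the product rule gives $G_tY_t\le\int_0^tG_s\,dM_s=:N_t$; since $N_t\ge G_tY_t\ge0$ is a non-negative local martingale with $N_0=0$, it is a supermartingale with $\EE N_t\le0$, forcing $N_t\equiv0$ and hence $Y_t\equiv0$, i.e.\ $(u_1,v_1)=(u_2,v_2)$ in $\mathbb{X}_u\times\mathbb{X}_v$. (Equivalently one may localise by $\tau_R:=\inf\{t:\int_0^t g\,ds\ge R\}$, take expectations to kill the martingale, apply the deterministic Gronwall inequality on $[0,t\wedge\tau_R]$, and let $R\to\infty$.) The main obstacle is the chemotactic term: the whole argument depends on estimating $\Div(U\nabla v_1+u_2\nabla V)$ by the dissipation plus $g(t)Y_t$ with $g$ only integrable in time, which is possible solely because of the one-dimensional embeddings $H^1_2\hookrightarrow L^\infty$ and $H^2_2\hookrightarrow W^{1,\infty}$; in higher dimensions this fails and the method does not close.
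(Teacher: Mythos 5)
Your proposal is correct, but it takes a genuinely different route from the paper's proof. The paper never forms the difference $u_1-u_2$ at the level of the energy estimate: it applies Gajewski's entropy functional $\Phi(u_1,u_2)=\int_\CO\big\{\phi(u_1)+\phi(u_2)-2\phi\big(\tfrac{u_1+u_2}{2}\big)\big\}\,dx$ with $\phi(u)=u(\ln u-1)$ to the pair of (non-negative) solutions, the key structural point being that the second-order It\^o correction of $\Phi$ vanishes identically for the linear multiplicative noise (see \eqref{trace term0}), so that $\Phi$ dominates $\tfrac14|\sqrt{u_1}-\sqrt{u_2}|^2_{L^2}$ and its evolution produces the relative-entropy dissipation $\int_\CO\frac{u_1u_2}{u_1+u_2}\big|\nabla\ln(u_1/u_2)\big|^2\,dx$. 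The coupling term $\nabla(v_1-v_2)$ is then controlled not pathwise but in expectation, via parabolic regularity for the $v$-equation combined with the one-dimensional embedding $L^1(\CO)\hookrightarrow H^{-1}_2(\CO)$, and the argument is closed by a Gronwall estimate in expectation inside stopping times $\tau_N$ (which cap $|u_i|_{L^1}$ and $|u_i|_{L^2}$ by $N$), removed at the end using Proposition \ref{uniformboundtaun} and Chebyshev. Your approach --- a plain $L^2\times H^1_2$ energy estimate for $(U,V)$, the one-dimensional embeddings $H^1_2\hookrightarrow L^\infty$ and $H^2_2\hookrightarrow W^{1,\infty}$, and a pathwise stochastic Gronwall argument with a merely time-integrable random coefficient --- is more elementary: it needs no entropy structure, no non-negativity of $u_i$, and no removal-of-stopping-time step; on the other hand it leans on the full assumed regularity \eqref{uL2 vL2} (in particular $v_1\in L^2(0,T;H^2_2(\CO))$ and $u_2\in L^2(0,T;H^1_2(\CO))$ a.s.), whereas the paper's entropy method gets by with $L^1$-based information on $u$ and with regularity of $v_1-v_2$ generated by the equation itself, and it exhibits an exact cancellation of the noise that would remain available if less regularity were known. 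One caveat: your parenthetical alternative (stop at $\tau_R$, take expectations, then apply the deterministic Gronwall lemma) does not work as stated, since after taking expectations the random coefficient $g$ cannot be pulled out of $\EE\int_0^{t\wedge\tau_R}g(s)Y_s\,ds$; your primary argument --- the exponential weight $G_t$ together with the fact that a non-negative continuous local martingale started at $0$ is a supermartingale and hence vanishes --- is the correct way to close the estimate, so this is only a presentational slip.
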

Since we are interested in the consequences of the pathwise uniqueness, in particular on  the existence of a unique strong solution,
 we will present the following Corollary and postpone the proof of
Theorem \ref{thm path uniq}.
\begin{corollary}\label{main_example}
Suppose the conditions in Theorem \ref{thm path uniq} are satisfied. Then, the system \eqref{chemonoise} admits a unique strong solution.
\end{corollary}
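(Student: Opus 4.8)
The plan is to deduce the corollary from the Yamada--Watanabe--Kurtz principle, which asserts that weak (martingale) existence together with pathwise uniqueness upgrades automatically to the existence of a strong solution that is, in addition, unique in law. Since pathwise uniqueness trivially entails uniqueness of strong solutions on a fixed basis, the statement ``unique strong solution'' then follows at once. Accordingly, the proof reduces to assembling the two hypotheses of that principle for the system \eqref{chemonoise} and invoking it.

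First I would record weak existence: by Theorem~2.6 of \cite{EH+DM+TT_2019}, under Assumptions~\ref{wiener} and~\ref{init} the system \eqref{chemonoise} (equivalently \eqref{chemonoisestrat} after the It\^o--Stratonovich correction of Remark~\ref{rem wiener}) admits a martingale solution, a priori living only on some auxiliary stochastic basis. Next I would upgrade the regularity of that solution so that it lies in the class $\mathbb{X}_u\times\mathbb{X}_v$ on which pathwise uniqueness is established: the uniform moment estimates of Proposition~\ref{uniformboundtaun} control the $L^2(\Omega;L^\infty(0,T;L^2(\CO)))$ norm of $u$ and the $L^2(\Omega;L^\infty(0,T;H^1_2(\CO)))$ norm of $v$, together with the parabolic gains $u\in L^2(\Omega;L^2(0,T;H^1_2(\CO)))$ and $v\in L^2(\Omega;L^2(0,T;H^2_2(\CO)))$, which are exactly the memberships defining $\mathbb{X}_u$ and $\mathbb{X}_v$. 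With the martingale solution placed in this class, Theorem~\ref{thm path uniq} supplies pathwise uniqueness.

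Having both ingredients, I would then apply the Yamada--Watanabe--Kurtz theorem, in the variational/martingale formulation suited to SPDEs (cf.\ \cite{cherny,engelbert,yamada} and, for the variational setting, \cite{roeckner_uniq}), to conclude that \eqref{chemonoise} possesses a strong solution on the prescribed basis $\mathfrak{A}=(\Omega,\CF,\BF,\PP)$ driven by the given pair $(\mathcal W_1,\mathcal W_2)$, and that this solution is unique (pathwise, hence also in law). Reading the conclusion back into the Stratonovich form \eqref{chemonoisestrat} via the same correction term closes the argument.

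The main obstacle I anticipate is not the measure-theoretic core of Yamada--Watanabe--Kurtz, which is by now routine, but the careful matching of regularity and of hypotheses on the initial data. Proposition~\ref{uniformboundtaun} requires the stronger integrability $\EE|u_0|_{L^1}^{12}<\infty$ and $\EE|v_0|_{B^{5/6}_{2,12}}^{12}<\infty$, whereas the corollary assumes only the weaker Assumption~\ref{init}; reconciling the two, presumably by a truncation/approximation of the initial condition combined with the a priori bounds and a passage to the limit, is the delicate point. Verifying that the abstract coherence and measurability conditions of the Kurtz framework genuinely hold for the nonlinear, Stratonovich-driven pair \eqref{chemonoise} is the remaining technical check, but it is the regularity reconciliation that carries the real weight of the proof.
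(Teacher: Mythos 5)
Your overall strategy coincides with the paper's: combine weak (martingale) existence from Theorem 2.6 of \cite{EH+DM+TT_2019} with the pathwise uniqueness of Theorem \ref{thm path uniq}, and upgrade to a unique strong solution by a Yamada--Watanabe-type theorem. The genuine gap is that you dismiss as a ``routine technical check'' precisely the step that constitutes the entire body of the paper's proof: identifying a \emph{concrete} Yamada--Watanabe theorem applicable to this system and verifying its hypotheses. The paper invokes Theorem 1.7 of \cite{qiao}, a Yamada--Watanabe theorem for stochastic evolution equations, and to apply it one must (i) recast \eqref{chemonoise} as an evolution equation $d\bar X(t)=b(t,\bar X(t))\,dt+\sigma(t,\bar X(t))\,d\CW(t)$ with respect to the Gelfand triple built from $\mathbb{V}_1=H^1_2(\CO)\times H^2_2(\CO)$, $\mathbb{H}=L^2(\CO)\times H^1_2(\CO)$, $\mathbb{V}_2=H^{-1}_2(\CO)\times L^2(\CO)$, together with a path space and explicit coefficient maps $b$ and $\sigma$; and (ii) prove the $\PP$-a.s.\ integrability condition \eqref{esti bx sig}, namely $\int_0^T |b(s,\bar X(s))|_{\mathbb{V}_2}\,ds+\int_0^T |\sigma(s,\bar X(s))|_{\mathcal{L}(\mathbb{U},\mathbb{H})}\,ds<\infty$ along the martingale solution. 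Item (ii) is not automatic, because the drift contains the nonlinear term $\Div(\bu\nabla\bv)$; its $H^{-1}_2$-norm is controlled only through the one-dimensional embedding $H^1_2(\CO)\hookrightarrow L^\infty(\CO)$, which yields $|\bu(s)\nabla\bv(s)|_{L^2}\le |\bu(s)|_{L^2}\,|\bv(s)|_{H^2_2}$, and then through the regularity $\sup_{0\le s\le T}|\bu(s)|_{L^2}<\infty$ and $\int_0^T|\bv(s)|^2_{H^2_2}\,ds<\infty$. Without naming the theorem you apply and performing this verification, your argument remains a plan rather than a proof; a generic ``Yamada--Watanabe--Kurtz principle'' does not apply off the shelf to a nonlinear, Stratonovich-driven SPDE system without such a formulation.

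Conversely, the obstacle you single out as carrying ``the real weight'' --- reconciling Assumption \ref{init} with the stronger moments $\EE|u_0|^{12}_{L^1}<\infty$ and $\EE|v_0|^{12}_{B^{5/6}_{2,12}}<\infty$ required by Proposition \ref{uniformboundtaun}, via a truncation of the initial data and passage to the limit --- is neither needed nor performed in the paper. The Corollary assumes ``the conditions in Theorem \ref{thm path uniq}'', and those conditions already include that the solutions under consideration lie in the class $\mathbb{X}_u\times\mathbb{X}_v$ of \eqref{uL2 vL2}; this membership (with Proposition \ref{uniformboundtaun} quoted as the source of such bounds) is exactly what feeds the integrability check (ii) above. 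So your proposal omits the verification the paper actually carries out, while adding an approximation argument the paper does not require.
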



%
\begin{proof}
We show that there exists a unique strong solution to the system \eqref{chemonoise} in the framework of the variational approach. To do so, we use Theorem 1.7 of \cite{qiao}.
Define
\begin{gather*}
\CW:=(\CW_1,\CW_2),\quad \mathbb{U}:=\mathcal H_1\times \mathcal H_2,
\\
\mathbb{V}_1:=  H^{1}_2(\CO) \times H^{2}_2(\CO), \quad
\mathbb{H}:= L^{2}(\CO) \times H_{2}^{1}(\CO),
\quad
 \mathbb{V}_2:=   H^{-1}_2(\CO) \times L^{2}(\CO),
\end{gather*}
 where we use the Gelfand triple
 $H_{2}^1(\CO)  \hookrightarrow  L^{2}(\CO) \hookrightarrow  H_{2}^{-1}(\CO)$.
 Let us define the following path space
 $$\mathbb{B}:=\lk\{ X=(u,v)^{\top}\in C([0,T];\mathbb{H}) : \int_0^ T|X(t)|_{\mathbb{V}_1}\, dt<\infty\rk\}.$$
Furthermore, let us consider the maps
$$
b:[0,T]\times \mathbb{B}\to\mathbb{V}_2 \quad \hbox{and} \quad \sigma:[0,T]\times \mathbb{B}\to \mathcal{L}_2(\mathbb{U}, \mathbb{H})$$ defined by
$$
b(t,X):=\lk(\begin{matrix} r_u \Delta u(t)-\chi\,\Div( u(t) \nabla v(t)) +\gamma u(t)\\
r_v \Delta v (t) - \alpha v(t) + \beta u(t)\end{matrix}\rk),
$$
and
$$
\sigma(t,X)[h]:=\lk(\begin{matrix}   u(t) h_1 \\  v(t)h_2
\end{matrix}\rk) ,
$$ for $X= (u,v)^\top \in \mathbb{B}$, $h= (h_1,h_2)^\top\in\mathbb{B}$, and $t
\in [0,T]$.

Let $\bar X= (\bu,\bv)^\top$ be a martingale solution of \eqref{chemonoise}. To show that $\bar X$ is a strong solution, we need to verify $\mathbb{P}-$a.s.
\begin{align}\label{esti bx sig}
\int_0^T |b(s,\bar X(s))|_{\mathbb{V}_2}\, ds
+ \int_0^T |\sigma(s, \bar X(s))|_{\mathcal{L}(\mathbb{U}, \mathbb{H})}\,ds < \infty.
\end{align}
We first estimate $|b(s,\bar X(s))|_{\mathbb{V}_2}$ as follows:
\begin{align}\label{esti bx}
&|b(s,\bar X(s))|_{\mathbb{V}_2}
\\
&= \Big| r_u \Delta \bu(s) - \chi \, \Div(\bu(s)\nabla \bv(s))-\gamma \bu(s) \Big|_{H_2^{-1}}
 + \Big|r_v \Delta \bv(s) - \alpha \bv(s) + \beta \bu(s) \Big|_{L^{2}}
\notag\\
& \leq  r_u |\Delta \bu(s)|_{H_2^{-1}}  
+ \chi | \Div(\bu(s)\nabla \bv(s))|_{H_2^{-1}} 
+ \gamma| \bu(s)|_{H_2^{-1}}   
 + r_v |\Delta \bv(s) - \alpha \bv(s)|_{L^{2}} + \beta |\bu(s)|_{L^{2}}
\notag\\
&\leq r_u | \bu(s)|_{H^{1}_{2}}
+ \chi | \bu(s)\nabla \bv(s)|_{L^{2}}
+ \gamma |(-\Delta)^{-1}\bu(s)|_{L^{2}}
 + (r_v + \alpha) | \bv(s)|_{H_2^{2}}  + \beta |\bu(s)|_{L^{2}}
 \notag\\
 & \leq (r_u+ c \gamma + c \beta) | \bu(s)|_{L^{{2}}}
+ \chi | \bu(s)\nabla \bv(s)|_{L^{2}}
 + (r_v + \alpha) | \bv(s)|_{H_2^{2}}.\notag
\end{align}
To estimate the second term on the right hand side, namely $| \bu(s)\nabla \bv(s)|_{L^{2}} $,
we use the embedding $H^1_2(\CO) \hookrightarrow L^\infty(\CO)$ and the
Cauchy--Schwarz inequality to obtain
\begin{align}\label{esti bu nab bv}
| \bu(s)\nabla \bv(s)|_{L^{2}}
\leq  | \bu(s)|_{L^{2}} |\nabla \bv(s)|_{L^{\infty}}
\leq  | \bu(s)|_{L^{2}} | \bv(s)|_{H^2_{2}}.
\end{align}
Using \eqref{esti bu nab bv} in \eqref{esti bx}, we have from Proposition \ref{uniformboundtaun},
\begin{align}\label{esti bx1}
\int_0^T |b(s,\bar X(s))|_{\mathbb{V}_2}\,ds
 &\leq (r_u+ c \gamma + c \beta) \int_0^T | \bu(s)|_{L^{{2}}}\,ds
+ \chi \int_0^T | \bu(s)|_{L^2} | \bv(s)|_{H^{2}_{2}}\,ds
\notag\\
& \quad
 + (r_v + \alpha) \int_0^T | \bv(s)|_{H_2^{2}}\,ds
 \notag\\
 & \leq (r_u+ c \gamma + c \beta)\int_0^T | \bu(s)|_{L^{{2}}}\,ds
+ \chi\Big\{\sup_{0 \leq s \leq T}| \bu(s)|_{L^2}  \int_0^T | \bv(s)|_{H^{2}_{2}} \,ds \Big\}
\notag\\
& \quad + (r_v + \alpha)\int_0^T | \bv(s)|_{H_2^{2}}\,ds < \infty.
\end{align}
 Also, using  Proposition \ref{uniformboundtaun}, we have $\PP$-a.s.
 \begin{align*}
 \int_0^T |\sigma(s,\bar X(s))|^2_{\mathcal{L}_2(\mathbb{U},\mathbb{H})}
\,ds
\leq \int_0^T \Big[
|\bu(s)|_{H^1_{2}}^2 + |\bv(s)|_{H_2^2}^2
\Big]\,ds < \infty   .
 \end{align*}
 This yields \eqref{esti bx sig}.
Finally, since $\bar X=(\bu,\bv)$ is a martingale solution,  the solution
process $\bar X$ with $x=(u_0,v_0)^\top$ $\PP$--a.s. satisfies
$$
\bar X(t)=x+\int_0^ t b(s,\bar X(s))\, ds + \int_0^ t \sigma(s,\bar X(s))d\CW(s),\quad t\in[0,T].
$$
In addition, by Theorem \ref{thm path uniq} we have pathwise uniqueness of the solution. Hence, by Theorem 1.7 of \cite{qiao}, we have shown that $\bar X(t)$ is the unique strong solution to the system \eqref{chemonoise}. This finishes the proof of the corollary.
\end{proof}

\begin{proof}[Proof of Theorem \ref{thm path uniq}]
\del{ In particular, there exists a constant $C=C(T)>0$ such that
\DEQSZ\label{finiteinlinfty}
\EE \sup_{0\le t\le T}|u_i(t)|^2_{H^1_2} &\le & C.
\EEQSZ}

Let us recall, since $(u_1,v_1)$ and $(u_2,v_2)$ are solutions to the system \eqref{chemonoise} with
$\PP(u_1(0)=u_2(0))=1$ and $\PP(v_1(0)=v_2(0))=1$, we can write
\DEQSZ
d{u}_i(t)&=& \Big( r_u \Delta u_i (t) 
-\chi \mbox{div} \big( u_i(t)\nabla v_i(t)\big) + \gamma u_i(t)\Big)\, dt
+ u_i(t)\,d\mathcal W_1(t)\label{sysu12.11}
\\
d{v}_i(t) &=& \big(r_v \Delta v_i(t)+\beta u_i(t) 
-\alpha v_i(t)\Big)\, dt + v_i(t)\, d \mathcal W_2(t),\,\quad t\in [0,T],\phantom{\big|}\,\,i=1,2.\label{sysv12.11}
\EEQSZ
%
%
%
In the first step we will introduce a family of stopping times $\{\tau_N:N\in\NN\}$,
and show that on the time interval $[0,\tau_N]$ the solutions $u_1$ and $u_2$, respective, $v_1$ and $v_2$, are indistinguishable.
In the second step, we will show that $\PP\lk( \tau_N<T\rk)\to 0$  for $N\to\infty$. From this follows that $u_1$ and $u_2$ are indistinguishable on the time interval $[0,T]$.
\paragraph{\bf Step I:}
Let us introduce the stopping times  $\{ \tau_N:N\in\NN\}$ as follows: let 
\DEQS
\tau_{N,i}^1 &:=&{\inf\{t \geq 0: \sup_{s \in [0,t)} |u_i(r)|_{L^{2}} \ge N \} \wedge T}; \quad i=1,2,
\\
\tau_{N,i}^2 &:=&{\inf\{t \geq 0: \sup_{r \in [0,t)} |u_i(r)|_{L^1} \ge N \} \wedge T}; \quad i=1,2,
\EEQS
and $\tau_N:=\min_{i=1,2}(\tau_{N,i}^1,\tau_{N,i}^2)$. 
The aim is to show
that $(u_1,v_1)$ and $(u_2,v_2)$ are indistinguishable on the time interval $[0,\tau_N]$. 

\medskip
Fix  $N\in\NN$.
To get uniqueness on $[0,\tau_N]$ we first stop the original solution processes at time $\tau_N$ and extend the processes $(u_1,v_1)$ and $(u_2,v_2)$
by other processes to the whole interval $[0,T]$.  For this purpose,
let $(y_1,z_1)$ be  solution to
\DEQSZ\label{eq1}
 \lk\{\barray dy_1(t) &=&\Delta y_1(t)\, dt
 + y_1 (t)\,d\theta_{\tau_N}\circ \mathcal W_1(t),\quad t \geq 0,
\\
 dz_1(t) &=& \lk(\Delta z_1(t)-\alpha z_1(t)\rk)\,dt
 + z_1 (t)\,d\theta_{\tau_N}\circ \mathcal W_2(t),\quad t \geq 0,
\earray\rk.  \EEQSZ
with initial data $ y_1(0):=u_1(\tau_N)$,  $ z_1(0):=v_1(\tau_N)$, and
let $(y_2,z_2)$ be  solutions to
\DEQSZ\label{eq22}
 \lk\{\barray dy_2(t) &=& \Delta y_2(t)\, dt + y_2 (t)\,d\theta_{\tau_N}\circ \mathcal W_1(t),\quad t\geq 0 ,
\\
 dz_2(t) &=& \Delta z_2(t)-\alpha z_2(t)+ z_2 (t)\,d\theta_{\tau_N}\circ \mathcal W_2(t),\quad t\geq 0,
\earray\rk.
 \EEQSZ
with initial data $ y_2(0):=u_2(\tau_N)$ and  $ z_2(0):=v_2(\tau_N)$. Here, $\theta$ denotes the shift operator, i.e.\ $\theta_\tau\circ W_i(t)=W_i(t+\tau)$, $i=1,2$.
Since $(u_1,v_1)$ and $(u_2,v_2)$ are continuous in $L^{2}(\CO)\times L^2(\CO)$, $(u_1(\tau_N),v_1(\tau_N))$ and $(u_2(\tau_N),v_1(\tau_N))$ are well defined and
 belong $\PP$--a.s.\ to $L^{2}(\CO)\times L^2(\CO)$.
Now, we define two new couple of processes in such a way that they coincide with $(u_1,v_1)$ and $(u_2,v_2)$ on the time interval $[0,\tau_N)$ and later on, they
represent the processes  $(y_1,z_1)$ and $(y_2,z_2)$. In particular, for $i=1,2$, let us describe the pair $(\bar u_i,\bar v_i) $ such that
\[
\bar u_i  (t) = \bcase u_i(t) & \mbox{ for } 0\le t< \tau_N,\\
y_i (t- \tau_N) & \mbox{ for } \tau_N\le  t \le T,\ecase
\quad
\bar v_i  (t) = \bcase v_i(t) & \mbox{ for } 0\le t< \tau_N,\\
z_i (t - \tau_N) & \mbox{ for } \tau_N\le  t \le T.\ecase
\]
%
Note, that  $(\bar u_1,\bar v_1) $ and $(\bar u_2,\bar v_2)$ solve
the truncated equation corresponding  to  \eqref{sysu12.11}-\eqref{sysv12.11} in $[0,\tau_N]$ and equation \eqref{eq1}--\eqref{eq22} in $[\tau_N,T]$.
%

\paragraph{\bf Step II:}
Our goal is to show that $(u_1,v_1)$ and $(u_2,v_2)$ are identical on the interval $[0,\tau_N]$.
%
To show this, we  follow an  idea of Gajewski \cite{Gajewski_1994}. Let us consider the function $\phi: \RR \rightarrow \RR$ by
\DEQS
\phi(u) &=& \lk\{ \barray
 u(\ln(u)-1), \quad  &  u> 0,
\\
 0, \quad & u \leq 0.
\earray\rk.
\EEQS
Exploiting the Lemma \ref{lem:gajewski} in Appendix \ref{TL} (see \cite{Gajewski_1994} for more details), we observe that $\phi$ satisfies
\DEQSZ\label{eqn phi gaj}
\phi(u_1)-2\phi\lk( \frac {u_1+u_2}2\rk) +\phi(u_2)\ge \frac 14 \lk(\sqrt{u_1}-\sqrt{u_2}\rk)^2,\quad u_1,u_2\ge 0.
\EEQSZ
Let us consider the functional $\Phi: L^{2}(\CO) \times L^{2}(\CO) \rightarrow \RR$ by
\begin{align}\label{def Phi}
\Phi(u_1,u_2):=\int_\CO \Big\{
\phi(u_1(x))+\phi(u_2(x))-2\phi\lk(\frac {u_1(x)+u_2(x)}2\rk)\Big\}\, dx.
\end{align}
Using \eqref{eqn phi gaj} and \eqref{def Phi}, one can show the following inequality
\begin{align}\label{gaj 1}
\dfrac{1}{4}|\sqrt{u_1}(t)-\sqrt{u_2}(t)|^2_{L^2}\le \Phi(u_1(t),u_2(t)).
\end{align}
Let us now apply the It\^o formula to the functional $\Phi$ for the  process $(u_1(t),u_2(t))$ for  $t \in [0,\tau_N]$.
Here, let us first observe that the trace term vanishes.
In particular, we have
\begin{align}\label{trace term0}
&\Big\langle D^2 \Phi (u_1,u_2)(s) \begin{pmatrix}
u_1 (s) \\ u_2(s)
\end{pmatrix}, \begin{pmatrix}
u_1(s)  \\ u_2(s)
\end{pmatrix}   \Big\rangle_{L^2}\notag\\
&= \int_{\CO} \Bigg \langle
\begin{pmatrix}
\dfrac{u_2(x,s)}{u_1(x,s)(u_1(x,s)+u_2(x,s))} & \dfrac{-1}{u_1(x,s)+u_2(x,s)}
\\
\dfrac{-1}{u_1(x,s)+u_2(x,s)} & \dfrac{u_1(s,x)}{u_2(x,s)(u_1(x,s)+u_2(x,s))}
\end{pmatrix}, \begin{pmatrix}
u_1(x,s) \\ u_2(x,s)
\end{pmatrix}
   \Bigg\rangle_{L^2}\,dx
   \notag\\
   &= \int_{\CO} \Big\langle
   \begin{pmatrix}
   0 \\ 0
   \end{pmatrix}, \begin{pmatrix}
u_1(s)  \\ u_2(s)
\end{pmatrix}
   \Big\rangle_{L^2}\,dx={\bf 0}.
\end{align}

Now, by applying the It\^o-formula to $\Phi(u_1(t),u_2(t))$ we can write
\DEQS\label{ito gaj}
\lqq{
\Phi((u_1(t),u_2(t))- \Phi(u_1(0),u_2(0)) }
&&
\\ &=&
 \int_0^t \Big[
\la \phi'(u_1(s)), d u_1(s) \ra + \la \phi'(u_2(s)), d u_2(s) \ra
\\ && {}
- \Big\langle \phi' \lk(\frac {u_1(s)+u_2(s)}2\rk), d(u_1+ u_2)(s) \Big \rangle
\Big].
\EEQS
%
%
More precisely, we have
\allowdisplaybreaks
\DEQS
\label{ito gaj}
\lqq{
\Phi((u_1(t),u_2(t))- \Phi(u_1(0),u_2(0)) }
&&
\\ &=&
 \int_0^t \Big[
\la \ln u_1(s) ,d u_1(s) \ra + \la \ln u_2(s) ,d u_2(s) \ra
- \Big \langle \ln \lk(\frac {u_1(s)+u_2(s)}2\rk), d(u_1(s)+ u_2(s)) \Big \rangle
\Big]
\notag
\\ &=&
 \int_0^t \Big[
\Big \langle \ln \lk(\frac {2u_1(s)}{u_1(s)+u_2(s)} \rk), d u_1(s) \Big \rangle
+ \Big \langle \ln \lk(\frac {2u_2(s)}{u_1(s)+u_2(s)} \rk), d u_2(s) \Big \rangle \Big].
\EEQS
For the sake of clarity, we omit the space variable in the next lines.
 Doing so, we obtain
\DEQS
\lqq{
\Phi((u_1(t),u_2(t))- \Phi(u_1(0),u_2(0))}
&&
\\ &=&\int_0^t \int_{\CO} \Bigg[\Big\{
r_u \Delta u_1 (s) -\chi \Div(u_1(s) \nabla v_1(s))\Big\}
\ln \lk(\frac {2u_1(s)}{u_1(s)+u_2(s)}\rk)
\notag\\ & &
{}+ \Big\{r_u \Delta u_2(s) - \chi\Div(u_2(s) \nabla v_2(s))\Big\}
\ln \lk(\frac {2u_2(s)}{u_1(s) +u_2(s)}\rk)
\Bigg]\,dx\,ds
\notag
\\ & &
{}+\gamma \int_0^t \int_{\CO}
\Big[
u_1(s) \ln \lk( \dfrac{2u_1(s)}{u_1(s)+u_2(s)} \rk)
+ u_2(s) \ln \lk(\dfrac{2u_2(s)}{u_1(s) + u_2(s)} \rk)\Big]\,dx\,ds
\notag
\\ &&
{}+ \int_0^t \int_{\CO}
\Big[u_1 (s)\ln \lk(\frac {2u_1(s)}{u_1(s) + u_2(s)} \rk)
+ u_2(s) \ln \lk(\frac {2u_2(s)}{u_1(s) + u_2(s)} \rk)
\Big]\,d\mathcal W_1(s).
\EEQS
Let us split the sum above into the following three terms
\DEQS
S_1&:= &
\int_0^t \int_{\CO} \Bigg[\Big\{
r_u \Delta u_1 (s) - \chi \Div(u_1(s) \nabla v_1(s)) \Big\}
\ln \lk(\frac {2u_1(s)}{u_1(s)+u_2(s)}\rk)
\notag\\ & &
{}+ \Big\{r_u \Delta u_2(s) -\chi \Div(u_2(s) \nabla v_2(s)) \Big\}
\ln \lk(\frac {2u_2(s)}{u_1(s) +u_2(s)}\rk)
\Bigg]\,dx\,ds,
\\
S_2&:=& \gamma
\int_0^t \int_{\CO}
\Big[
u_1(s) \ln \lk( \dfrac{2u_1(s)}{u_1(s)+u_2(s)} \rk)
+ u_2(s) \ln \lk(\dfrac{2u_2(s)}{u_1(s) + u_2(s)} \rk)\Big]\,dx\,ds,
\EEQS
and
\DEQS
S_3&:=&
 \int_0^t \int_{\CO}
\Big[u_1 (s)\ln \lk(\frac {2u_1(s)}{u_1(s) + u_2(s)} \rk)
+ u_2(s) \ln \lk(\frac {2u_2(s)}{u_1(s) + u_2(s)} \rk)
\Big]\,d\mathcal W_1(s).
\EEQS
%
To start with $S_1$, using integration by parts and rearranging the terms, we obtain
\allowdisplaybreaks
\begin{align}\label{int by parts}
&\int_{\CO} \Bigg[
\Big\{
r_u \Delta u_1 (s)- \chi \Div(u_1(s)\nabla v_1(s))\Big\}
 \ln\lk(\frac {2u_1(s)}{u_1(s)+u_2(s)}\rk)
 \notag\\ & \quad
+
\Big\{(r_u \Delta u_2 (s)-\chi \Div(u_2(s)\nabla v_2(s))\Big\} \ln\lk(\frac {2u_2(s)}{u_1(s)+u_2(s)}\rk)\Bigg]\,dx
\notag\\
&=-
\int_{\CO}\Bigg[
\lk(r_u \nabla u_1(s)+ \chi u_1(s)\nabla v_1(s)\rk) \, \frac {u_1(s)+u_2(s)}{2u_1(s)}
\notag\\ & \quad \quad \times
\lk(\frac {2(u_1(s)+u_2(s)) \nabla u_1(s)-2u_1(s)(\nabla u_1(s)+ \nabla u_2(s))}{(u_1(s)+u_2(s))^2}\rk)
\notag\\&\quad
+ \Big( r_u \nabla u_2(s)+\chi u_2(s)\nabla v_2(s)\Big) \, \Big(\frac {u_1(s)+u_2(s)}{2u_2(s)}\Big)
\notag\\
& \quad \times
\lk(\frac {2(u_1(s)+u_2(s))\nabla u_2(s)-2u_2(s) \lk(\nabla u_1(s)+ \nabla u_2(s)\rk)}
{(u_1(s)+u_2(s))^2}\rk)\Bigg]\,dx
\notag\\
&=-
\int_{\CO}\Bigg[
\Big( r_u \nabla u_1(s)+\chi u_1(s)\nabla v_1(s)\Big) \,
\lk(\frac {u_2(s)\nabla u_1(s) - u_1(s)\nabla u_2(s)}{(u_1(s)+u_2(s))u_1(s)}\rk)
\notag\\
&\quad+
\Big(r_u \nabla u_2(s) + \chi u_2(s)\nabla v_2(s) \Big) \,
\Big(\frac {u_1(s)\nabla u_2(s) - u_2(s)\nabla u_1(s)}{(u_1(s)+u_2(s))u_2(s)}\Big)\Bigg]\,dx
\notag\\
&=-
\int_{\CO}\Bigg[
\lk(\frac{r_u \nabla u_1(s)}{u_1(s)}+\chi\nabla v_1(s) \rk) \,\frac {u_1(s) u_2(s)}{u_1(s)+ u_2(s)}
\lk(\frac{\nabla u_1(s)}{u_1(s)} - \frac{\nabla u_2(s)}{u_2(s)}\rk)
\notag\\
&\quad+
\lk(\frac{r_u \nabla u_2(s)}{u_2(s)}+\chi\nabla v_2(s) \rk) \,\frac {u_1(s) u_2(s)}{u_1(s)+ u_2(s)}
\lk(\frac{\nabla u_2(s)}{u_2(s)} - \frac{\nabla u_1(s)}{u_1(s)}\rk)\Bigg]\,dx
\notag\\
& = -\int_{\CO}\Bigg[
\frac{u_1(s) u_2(s)}{u_1(s)+u_2(s)}
\lk(\frac{ \nabla u_1(s)}{u_1(s)}
- \frac{\nabla u_2(s)}{u_2(s)}\rk)
\Big\{r_u \lk(\frac{\nabla u_1(s)}{u_1(s)}
-\frac{\nabla u_2(s)}{u_2(s)}\rk)
\notag\\
&\quad+ \chi \big(\nabla v_1(s)-\nabla v_2(s)\big)\Big\}
\Bigg]\,dx
\notag\\
&=-\int_{\CO}\Bigg[\frac{u_1(s) u_2(s)}{u_1(s)+u_2(s)}
\Big\{ r_u \Big|\nabla \ln\Big( \frac{ u_1(s)}{u_2(s)}\Big)\Big|^2
\notag\\
&\quad
+\chi \nabla \ln\Big( \frac{ u_1(s)}{u_2(s)} \Big)\cdot
 \nabla \Big(v_1(s)- v_2(s) \Big)\Big\}\Bigg]\,dx.
\end{align}
Using the Young inequality we can show for any  $\eps>0$, that  the left hand side of \eqref{int by parts} is dominated by
\allowdisplaybreaks
\begin{align}\label{int by parts eps}
& -r_u (1 - \eps) \int_{\CO}
\frac {u_1(s) u_2(s)}{u_1(s)+u_2(s)} \lk|\nabla \ln\lk( \frac{ u_1(s)}{u_2(s)}\rk)\rk|^2\,dx
\notag\\
&\quad
+ \frac{\chi^2}{4r_u \eps} \int_{\CO}
\frac {u_1(s) u_2(s)}{u_1(s)+u_2(s)} \big|\nabla (v_1(s)-v_2(s)) \big|^2\,dx.
\end{align}
Using $\frac{u_1 u_2}{u_1+u_2} \leq u_2$, we observe that \eqref{int by parts eps}
can be further simplified as
\begin{align}\label{esti u2 sim}
& -r_u (1- \eps) \int_{\CO}
\frac {u_1(s) u_2(s)}{u_1(s)+u_2(s)} \lk|\nabla \ln\lk( \frac{ u_1(s)}{u_2(s)}\rk)\rk|^2\,dx
\notag\\
&\quad
+ \frac{\chi^2}{4r_u \eps} \int_{\CO}
 u_2(s) \big|\nabla (v_1(s)-v_2(s)) \big|^2\,dx.
\end{align}
We now evaluate the second term of \eqref{esti u2 sim}, i.e.\ $\int_{\CO} u_2 \lk|\nabla v_1-\nabla v_2\rk|^2 \,dx$.
Applying  the H\"older inequality  with $\frac{1}{2}+\frac{1}{2}=1$ and using the definition of stopping time, we achieve 
\begin{align}\label{vterm}
\int_{\CO} u_2 \lk|\nabla v_1-\nabla v_2\rk|^2 \,dx
&
\leq \lk|\nabla v_1-\nabla v_2\rk|^2_{L^{2}} |u_2|_{L^{2}}
\leq N\,  \lk|\nabla v_1-\nabla v_2\rk|^2_{L^{2}}
.
\end{align}
To evaluate $S_2$, we use Claim \ref{claim nonneg}.
 We first note that if $u_1=u_2=0$, then
\DEQS
S_2= \gamma
\int_0^t \int_{\CO}
\Big[
u_1(s) \ln \lk( \dfrac{2u_1(s)}{u_1(s)+u_2(s)} \rk)
+ u_2(s) \ln \lk(\dfrac{2u_2(s)}{u_1(s) + u_2(s)} \rk)\Big]\,dx\,ds
=0.\notag
\EEQS
Otherwise, we note that $S_2$ can be written as
\DEQSZ\label{S2}
S_2 &=&\lk\{ \barray
\mathlarger{\gamma \int_0^t \int_{\CO}}
u_1(s) f(u)\,dx\,ds, \quad \mbox{if} \quad u_1(s) \neq 0, \,\,u=\dfrac{u_2}{u_1},
\\
\mathlarger{\gamma \int_0^t \int_{\CO}}
u_2(s)f(u)\,dx\,ds, \quad \mbox{if} \quad u_2(s) \neq 0,\,\, u=\dfrac{u_1}{u_2},
\phantom{\big|}
\earray\rk.
\EEQSZ
where $f$ is given in Claim \ref{claim nonneg}.
Putting $u=\dfrac{u_2}{u_1}$ in \eqref{def f} in Claim \ref{claim nonneg}, we see that
 \begin{align}
  S_2
\leq \gamma \int_0^t \int_{\CO} |\sqrt{u_1(x,s)}-\sqrt{u_2(x,s)}|^2_{L^2}\,dx\,ds.
 \end{align}
Furthermore, since $S_3$ is a local martingale, the  expectation of this term vanishes.
Combining all the estimates above, we obtain from \eqref{gaj 1}
\begin{align}\label{combine esti}
&\dfrac{1}{4}|\sqrt{u_1}(t)-\sqrt{u_2}(t)|^2_{L^2}
+ r_u(1-\ep)\int_0^t \int_{\CO}
\frac{u_1 (s) u_2 (s)}{u_1(s) + u_2(s)} \,\Big|\nabla \ln \lk( \frac{ u_1(s)}{u_2(s)}\rk)\Big|^2\,dx\,ds
\notag\\
&\leq \Phi(u_1(0),u_2(0))+\frac{\chi^2C_{\CO}N}{4r_u \eps}
 \,\int_0^t \lk|\nabla v_1(s)-\nabla v_2(s) \rk|^2_{L^{2}}\,ds
 \notag\\ & \quad
+ \gamma \int_0^t \int_{\CO} |\sqrt{u_1}(s)-\sqrt{u_2}(s)|^2_{L^2}\,dx\,ds
.
\end{align}
Since $ L^1(\CO) \hookrightarrow H_{2}^{-1}(\CO)$ in one dimension, one can show that
 there exists some  constant $C>0$ such that
\begin{align*}
\EE \| \nabla v_1-\nabla v_2\|_{L^2(0,T;L^{2})}^2
\le C \EE \| u_1-u_2\|_{L^2(0,T;H^{-1}_2)}^2.
\end{align*}
By elementary calculations and using the embedding $L^{1}(\CO) \hookrightarrow H^{-1}_2(\CO)$, we obtain
\begin{align*}
& \EE\Big[ \int_0^T | u_1(s)-u_2(s)|^2_{H^{-1}_2}\,ds\Big]
\leq \EE\Big[ \int_0^T | u_1(s)-u_2(s)|^2_{L^{1}}\,ds\Big]
\\ & \leq
 \EE\Big[ \int_0^ T \Big|(\sqrt{u_1}(s)-\sqrt{u_2}(s)) (\sqrt{u_1}(s)+\sqrt{u_2}(s))\Big|^2_{L^{1}}\,ds\Big]
\\
& \le \EE\Big[ \int_0^ T \Big(
\int_{\CO}|\sqrt{u_1}(x,s)-\sqrt{u_2}(x,s)|\,|\sqrt{u_1}(x,s)+\sqrt{u_2}(x,s)|\,dx \Big)^{2}\,ds\Big]
\\
& \le C\, \EE\Big[ \int_0^T \Big( \int_{\CO}|\sqrt{u_1}(x,s)-\sqrt{u_2}(x,s)|^{2}\,dx \Big)
 \int_{\CO}|\sqrt{u_1}(x,s)+\sqrt{u_2}(x,s)|^{2}\,dx \,ds\Big]
\\ & \le \EE\Big[\int_0^T   |\sqrt{u_1}(s)-\sqrt{u_2}(s)|^{2}_{L^2}\,
 |u_1(s)+u_2(s)|_{L^{1}}\,ds \Big].
\end{align*}
Then,  by the definition of the stopping time we know $|u_i|_{L^{1}} \leq N$ for $i=1,2$, and, hence
\begin{align*}
& \EE\Big[ \int_0^T | u_1(s)-u_2(s)|^2_{H^{-1}_2}\,ds\Big]
\leq
2C\,N  \, \EE\Big[ \int_0^ T |\sqrt{u_1}(s)-\sqrt{u_2}(s)|^2_{L^2}\,ds \Big]
 .
\end{align*}
Substituting above in \eqref{combine esti}, we obtain
\begin{align}
&\dfrac{1}{4}\EE\lk[|\sqrt{u_1}(t)-\sqrt{u_2}(t)|_{L^2}^2 \rk]
+r_u (1- \eps)\EE \lk[\int_0^T \int_{\CO} \frac{u_1(s) u_2(s)}{u_1(s) + u_2(s)}
\lk|\nabla \ln \lk(\frac {u_1(s)}{u_2(s)}\rk)\rk|^2\,dx\,ds \rk]
\notag\\
&
\le \Phi(u_1(0),u_2(0))+\lk(\frac{\chi^2C\,C_{\CO}N^2}{2r_u \eps}+\gamma \rk)
  \, \EE\lk[ \int_0^ T |\sqrt{u_1}(s)-\sqrt{u_2}(s)|^2_{L^2}\,ds \rk]
.
\end{align}
Using the fact that for $u_1(0)=u_2(0)$ we have $\Phi(u_1(0),u_2(0))=0$, a  Gronwall argument gives
\begin{align}
\EE \Big[ |\sqrt{u_1(t)}-\sqrt{u_2(t)}|^2_{L^2}
\Big] \le e^{2\big(\frac{\chi^2C\,C_{\CO}N^2}{2r_u \eps}+1 \big)T} \Phi(u_1(0),u_2(0))=0.
\end{align}

\paragraph{\bf Step IV:}
We show that $\PP\lk( \tau_N<T\rk) \to 0$ as $N\to\infty$.
\begin{align*}
\{\tau_N < T \} \subset \Big\{ &\sup_{s \in [0,T]} |u_1(s)|_{L^{2}} \ge N
\quad \hbox{or} \quad
 \sup_{s \in [0,T]} |u_2(s)|_{L^{2}} \ge N \quad \hbox{or} \quad
  \\ &\sup_{s \in [0,T]} |u_1(s)|_{L^{1}} \ge N 
 \quad \hbox{or} \quad
 \sup_{s \in [0,T]} |u_2(s)|_{L^{1}} \ge N \Big\}.
\end{align*}
Therefore, due to Proposition \ref{uniformboundtaun}, Theorem 2.6 of \cite{EH+DM+TT_2019} and, since,  $\hbox{LlogL}(\CO) \hookrightarrow L^1(\CO)$,
one can observe using the Chebyscheff inequality
\begin{align*}
\PP \lk(\tau_N < T \rk) &\leq \PP\Big( \sup_{s \in [0,T]} |u_1(s)|_{L^{2}} \ge N \Big)
+ \PP \Big( \sup_{s \in [0,T]} |u_2(s)|_{L^{2}} \ge N \Big)
\\
&\quad + \PP\Big( \sup_{s \in [0,T]} |u_1(s)|_{L^{1}} \ge N \Big)
+ \PP \Big( \sup_{s \in [0,T]} |u_2(s)|_{L^{1}} \ge N \Big)
\leq \dfrac{4C}{N}.
\end{align*}
It follows $$\PP\lk( \tau_N \le   T \rk)\longrightarrow 0 \quad \mbox{as} \quad N\to\infty.$$
Hence, both processes $u_1$ and $u_2$ coincide on $[0,T]$, so are $v_1$ and $v_2$.
This completes the proof of Proposition \ref{thm path uniq}.
\end{proof}

\section{Proof of Proposition \ref{uniformboundtaun}}\label{add reg proof}

\begin{proof}[Proof of Proposition \ref{uniformboundtaun}]
The proof consists of two steps. In the first step, we will estimate the $p$-th moment of $L^1$ norm of $u$ for $p>1$. In the next step, 
we obtain uniform bound for the $L^{2}(\Omega;L^{\infty}([0,T];L^{2}(\CO))$ norm of $u$ and the $L^{2}(\Omega;L^\infty([0,T];H^{1}_2(\CO))$ norm of $v$.
\begin{steps}
\item Let us define the integral operator $\mathcal I:L^1(\CO) \rightarrow \RR$ by $\mathcal I(u):=\int_{\CO} u(x)dx$. Then, $\mathcal I$ is linear functional, hence is of class~$C^2$; 
(see \cite[p.\ 11 Example 1.3 (b)]{Ambrosetti_1995}).
Applying the It\^o formula \cite[Chapter 4.4]{pratozab}) to the functional $\mathcal I(u)=\int_{\CO} u(x)dx$ for the process $\{u(t)  \}_{t \in [0,T]}$, we obtain
\begin{align}
\mathcal I(u(t))& = \mathcal I(u(0)) + \int_0^t \Big\langle \dfrac{\partial \mathcal I}{\partial u}, d u_1(s)  \Big\rangle
+ \frac{1}{2} \int_0^t \hbox{Tr} \big[(\dfrac{\partial^2 \mathcal I}{\partial u^2})(u u^\top) \big]\,ds.
\end{align}
Using $\dfrac{\partial^2 \mathcal I}{\partial u^2}=0$, the above equation reduces to
\begin{align}\label{eqn Phi1}
\mathcal I(u(t)) &= \mathcal I(u(0)) + \int_0^t \int_{\CO} \Big[ r_u \Delta u(s,x) - \chi \di(u(s,x)\nabla v(s,x)) + \gamma u(s,x) \Big]\,dx\,ds
\notag\\
& \quad + \int_0^t \int_{\CO} u(s,x)\, d\mathcal W_1(s,x).
\end{align}
Now using integration by parts in space variable and by Neumann boundary conditions, we see that the second and third terms in the right hand side of \eqref{eqn Phi1} vanishes. Indeed, we have the following equality.
\begin{align}
r_u \int_{\CO} \Delta u(s,x)\,dx & =
 r_u \int_{\partial \CO} \nabla u(s,x) \underbrace{\frac{\partial u}{\partial \nu}(s,x)}_{=0}\,d\mathcal{S}(x)
=0,
\label{int by parts1}\\
 \chi \int_{\CO} \di(u(s,x) \nabla v(s,x))\,dx & =
 \chi \int_{\partial \CO} u(s,x) \underbrace{\frac{\partial v}{\partial \nu}(s,x)}_{=0}\,d\mathcal{S}(x)
=0,\label{int by parts2}
\end{align}
where, $\mathcal{S}$ is the surface measure on $\partial \CO$.
In this way, substituting \eqref{int by parts1}-\eqref{int by parts2} in \eqref{eqn Phi1}, we obtain
\begin{align}\label{det iu}
\mathcal I(u(t)) = \mathcal I(u(0)) + \gamma \int_0^t \mathcal I(u(s))\, ds + \int_0^t \int_{\CO} u(s,x)
\, d \mathcal{W}_1(s,x).
\end{align}
Our aim in this step is to obtain the higher moment of $\mathcal I(u(t))$ for $t \in [0,T]$. 
Using \eqref{sum W1}, and $\{\psi_k\}_{k \in \ZZ}$ being orthonormal basis in $L^2(\CO)$, it is clear that
 $\{\psi^{(1)}_k\}_{k \in \ZZ}=\{\lambda_k^{(1)} \psi_k\}_{k \in \ZZ}$ is an orthonormal basis of $\CH_1$. Therefore, we have
\begin{align}\label{sum W1}
\mathcal{W}_1(r,x)=\sum_{k \in \ZZ} \psi_k^{(1)}(x) \beta_k^{(1)}(r)=\sum_{k \in \ZZ}\lambda_k^{(1)} \psi_k(x) \beta_k^{(1)}(r).
\end{align} 
We define 
\begin{align}\label{def Phiu}
\Phi^u_s : \CH_1 \rightarrow \mathbb{R} \quad \hbox{by} \quad
\Phi^u_s(\eta)= \int_{\CO} u(s,x)\eta(x)\, dx, \quad \hbox{for all} \,\, \eta \in \CH_1.
\end{align}
Let $\Phi^u= \Big\{ \Phi^u_t, \,\, t \in [0,T]  \Big\}$ be a measurable $L_2^0$-valued process. We define the norm of $\Phi^u$ by
\begin{align}
\|\Phi^u\|_T := \Big[ \EE \Big(\int_0^T  \|\Phi^u_s   \|_{L_2^{0}}^2 \, ds  \Big)   \Big]^{\frac{1}{2}}.
\end{align}
We refer to \cite[Chapter 4] {pratozab} to define the stochastic integral with respect to $\mathcal{W}$ of any $L_2^{0}$-valued predictable process $\Phi^u$ such that  $\|\Phi^u\|_T < \infty$.  
Following equation (3.7) in \cite{Dalang+Lluis} with $H=\RR$ and $V=\CH_1$ , we now compute $\|\Phi^u\|_T^2$.
\begin{align}\label{Phi esti}
\|\Phi^u\|_T^2&=\EE\Big[\int_0^T \|\Phi^u_s   \|_{L_2^{0}}^2 \, ds   \Big]
= \EE\Big[\int_0^T \sum_{k \in \ZZ} |\Phi^u_s (\psi_k^{(1)}) |^2 \, ds   \Big]
\notag\\
&=\EE\Big[\int_0^T \sum_{k \in \ZZ} \Big|\int_{\CO} u(s,x) \lambda_k^{(1)} \psi_k(x)\,dx \Big|^2 \, ds   \Big]
\notag\\
&=\EE\Big[\int_0^T \sum_{k \in \ZZ} |\lambda_k^{(1)}|^2 
 \Big|\int_{\CO} u(s,x) \psi_k(x)\,dx \Big|^2 \, ds   \Big]
 \notag\\
& \leq \EE\Big[\int_0^T \sum_{k \in \ZZ} |\lambda_k^{(1)}|^2 |\psi_k|_{L^\infty}^2
 \Big|\int_{\CO} u(s,x)\,dx \Big|^2 \, ds   \Big].
\end{align}
Now we use Monotone Convergence Theorem (Theorem 1.2.7 of \cite{Walter1987}, Theorem 2.15 of \cite{Folland_1999}). To do so, we take $X=\Omega \times [0,T]$, $f_k(\omega,s):=  |\lambda_k^{(1)}|^2 |\psi_k|_{L^\infty}^2 \Big|\int_{\CO} u(s,x,\omega)\,dx \Big|^2 $. Then, we have
\begin{align*}
\int_{\Omega \times [0,T]} \sum_{k \in \ZZ} f_k(\omega,s)\,ds\,d \mathbb{P}(\omega)
= \sum_{k \in \ZZ} \int_{\Omega \times [0,T]}  f_k(\omega,s)\,ds\,d \mathbb{P}(\omega),
\end{align*}
which implies the following equality
\begin{align}\label{mct}
 \EE\Big[\int_0^T \sum_{k \in \ZZ} |\lambda_k^{(1)}|^2 |\psi_k|_{L^\infty}^2
 \Big|\int_{\CO} u(s,x)\,dx \Big|^2 \, ds   \Big]
 \notag\\
= \sum_{k \in \ZZ} |\lambda_k^{(1)}|^2 |\psi_k|_{L^\infty}^2 
\EE\Big[\int_0^T  
 \Big|\int_{\CO} u(s,x)\,dx \Big|^2 \, ds   \Big]
 .
\end{align}
Combining \eqref{Phi esti} and \eqref{mct}, using Theorem 2.6 of \cite{EH+DM+TT_2019} and, since,  $\hbox{LlogL}(\CO) \hookrightarrow L^1(\CO)$, we achieve
\begin{align}\label{Phi esti 1}
\|\Phi^u\|_T^2& \leq
\sum_{k \in \ZZ} |\lambda_k^{(1)}|^2 |\psi_k|_{L^\infty}^2 
\EE\Big[\int_0^T  
 \Big|\int_{\CO} u(s,x)\,dx \Big|^2 \, ds   \Big]
 < \infty.
\end{align}
Now, by Proposition 3.4 of \cite{Dalang+Lluis} with $H=\mathbb{R}$ and $f_k=1$, we have
\begin{align}\label{inter 1}
\int_0^T \Phi^u_s \, d \mathcal{W}_1(s)& = \sum_{j=1}^{\infty} \int_0^T \Phi^u_s(\psi_j^{(1)})\, d \beta_j^{(1)}(s)
= \sum_{j \in \ZZ} \lambda_j^{(1)} \int_0^T 
\Big(\int_{\CO} u(s,x) \psi_j(x)\,dx \Big) d \beta_j^{(1)}(s).
\end{align}
Also, using \eqref{def Phiu}, we have
\begin{align}\label{inter 2}
\int_0^T \Phi^u_s \, d \mathcal{W}_1(s)& 
= \int_0^T \Big(\int_{\CO}
\sum_{j\in \ZZ} u(s,x) \lambda_j^{(1)} \psi_j(x)\,dx \Big) d \beta_j^{(1)}(s).
\end{align}
Using \eqref{inter 1} and \eqref{inter 2}, we now estimate the following
\begin{align}
&\EE\Big[\sup_{0 \leq s \leq t}\Big| \int_0^s \int_{\CO} \sum_{k \in \ZZ}
  \psi_k^{(1)}(x)\, u(r,x) \, d x \, d \beta_k^{(1)}(r)  \Big|   \Big]
\notag\\ 
&= \EE\Big[\sup_{0 \leq s \leq t}\Big|\sum_{k \in \ZZ} \int_0^s 
\Big(\int_{\CO}  u(r,x)\,\psi_k^{(1)}(x)\, d x \Big)\, d \beta_k^{(1)}(r)  \Big|   \Big]
\notag\\
&\leq \EE\Big[\sup_{0 \leq s \leq t}\sum_{k \in \ZZ} 
\Big|\int_0^s  \Big(\int_{\CO}  u(r,x)\,\psi_k^{(1)}(x)\, d x \Big)\, d \beta_k^{(1)}(r)  \Big|   \Big]
\notag\\
&= \EE\Big[\sum_{k \in \ZZ} \sup_{0 \leq s \leq t}
\Big|\int_0^s  \Big(\int_{\CO}  u(r,x)\,\psi_k^{(1)}(x)\, d x \Big)\, d \beta_k^{(1)}(r)  \Big|   \Big].
\end{align}
Again, we use Monotone Convergence Theorem (Theorem 1.2.7 of \cite{Walter1987}, Theorem 2.15 of \cite{Folland_1999}). To do so, we take $$X=\Omega,
\quad 
 f_k(\omega,t):= \sup_{0 \leq s \leq t}
\Big|\int_0^s  \Big(\int_{\CO}  u(r,x)\,\psi_k^{(1)}(x)\, d x \Big)\, d \beta_k^{(1)}(r)  \Big| 
.$$ Then, we have
\begin{align*}
\int_{\Omega} \sum_{k \in \ZZ} f_k(\omega,t)\,d \mathbb{P}(\omega)
= \sum_{k \in \ZZ} \int_{\Omega }  f_k(\omega,t)\,d \mathbb{P}(\omega),
\end{align*}
which implies the following equality
\begin{align}\label{MCT}
&\EE\Big[\sum_{k \in \ZZ} \sup_{0 \leq s \leq t}
\Big|\int_0^s  \Big(\int_{\CO}  u(r,x)\,\psi_k^{(1)}(x)\, d x \Big)\, d \beta_k^{(1)}(r)  \Big|   \Big]
\notag\\
& = \sum_{k \in \ZZ} \EE\Big[ \sup_{0 \leq s \leq t}
\Big|\int_0^s  \Big(\int_{\CO}  u(r,x)\,\psi_k^{(1)}(x)\, d x \Big)\, d \beta_k^{(1)}(r)  \Big|   \Big].
\end{align}
Using the Burkholder-Davis-Gundy inequality for real-valued Brownian motion, we achieve
\begin{align}\label{bdg k}
& \EE\Big[ \sup_{0 \leq s \leq t}
\Big|\int_0^s  \Big(\int_{\CO}  u(r,x)\,\psi_k^{(1)}(x)\, d x \Big)\, d \beta_k^{(1)}(r)  \Big|   \Big]
\notag\\
&\leq \EE\Big[ 
\int_0^t  \Big(\int_{\CO}  u(r,x)\,\psi_k^{(1)}(x)\, d x \Big)^2 \, dr  \Big]^{\frac{1}{2}}\notag\\
& \leq |\psi_k^{(1)}|^2_{L^\infty}
 \EE\Big[ 
\int_0^s  \Big(\int_{\CO}  u(r,x)\, d x \Big)^2 \, dr  \Big]^{\frac{1}{2}}.
\end{align}
Thus, summing over all $k$ and using \eqref{mct} and \eqref{bdg k} we have 
\begin{align}\label{MCT-BDG}
&\EE\Big[\sum_{k \in \ZZ} \sup_{0 \leq s \leq t}
\Big|\int_0^s  \Big(\int_{\CO}  u(r,x)\,\psi_k^{(1)}(x)\, d x \Big)\, d \beta_k^{(1)}(r)  \Big|   \Big]
\notag\\
& = \sum_{k \in \ZZ} \EE\Big[ \sup_{0 \leq s \leq t}
\Big|\int_0^s  \Big(\int_{\CO}  u(r,x)\,\psi_k^{(1)}(x)\, d x \Big)\, d \beta_k^{(1)}(r)  \Big|   \Big]
\notag\\
& \leq  \sum_{k \in \ZZ} |\psi_k^{(1)}|^2_{L^\infty}
 \EE\Big[ 
\int_0^t  \Big(\int_{\CO}  u(r,x)\, d x \Big)^2 \, dr  \Big]^{\frac{1}{2}}.
\end{align}
Using the similar argument for $p>1$, we obtain
\begin{align}\label{BDG W1}
&\EE\Big[\sup_{0 \leq s \leq t}\Big| \int_0^s \int_{\CO} \sum_{k \in \ZZ}
  \psi_k^{(1)}(x) u(r,x) \, d x \, d \beta_k^{(1)}(r)  \Big|^p   \Big]
\notag\\
& \leq C_p \gamma_1^{\frac{p}{2}} \,
\EE \Big[\Big(\int_0^t \Big(  \int_{\CO}   u(r,x) \,dx \Big)^2\,ds \Big)^{\frac{p}{2}}\Big],
\end{align}
where $\gamma_1= \sum_{k \in \ZZ} |\psi_k^{(1)}|_{L^\infty}^2 \leq \sum_{k \in \ZZ} |\lambda^{(1)}_k |^2 |\psi_k|_{L^\infty}^2 \leq \sum_{k \in \ZZ} |\lambda^{(1)}_k |^2 < \infty$.
Using \eqref{BDG W1} we have
\begin{align}
&\EE\Big[\sup_{0 \leq s \leq t}\Big| \int_0^s \int_{\CO}  u(r,x) \, d \mathcal{W}_1(r,x)  \Big|^p   \Big]
\leq  C_p \gamma_1^{\frac{p}{2}} \, \EE\Big[ \Big(\int_0^t \Big( \int_{\CO} u(s,x) dx  \Big)^2 \, ds   \Big)^{\frac{p}{2}}\Big]
\notag\\
&\leq  C_p \gamma_1^{\frac{p}{2}} \, \EE\Big[ \Big(\sup_{0 \leq s \leq t} \int_{\CO} u(s,x)dx \Big)^{\frac{p}{2}}
\Big(\int_0^t \int_{\CO} u(s,x) dx \, ds \Big)^{\frac{p}{2}}    \Big]
\notag\\
&
\leq \frac{1}{2}\EE\Big[ \sup_{0 \leq s \leq t} \Big(\int_{\CO} u(s,x)dx \Big)^p\Big]
+ \frac{1}{2}t^{p-1} C_p \gamma_1^{\frac{p}{2}} \, \EE\Big[ \int_0^t \Big(\int_{\CO} u(s,x)dx \Big)^p\,ds\Big].
\end{align}
We now raise power $p \geq 1$ on both sides of \eqref{det iu}, take supremum over $s \in [0,T]$, and then the expectation. Hence, we obtain
\begin{align}
\EE\Big[ \sup_{0 \leq s \leq t} \mathcal{I}^p(u(t))\Big]
&= \EE[  \mathcal{I}^p(u(0))]
+ \gamma \,p\, \EE\Big[ \Big( \int_0^t \mathcal{I}(u(s))\,ds \Big)^p\Big]
+\frac{1}{2}\EE\Big[ \sup_{0 \leq s \leq t} \mathcal{I}^p(u(s))\Big]
\notag\\ &
\quad + \frac{1}{2}t^{p-1} C_p \gamma_1^{\frac{p}{2}} \,\EE\Big[ \int_0^t  \mathcal{I}^p(u(s))\,ds\Big].
\end{align}
This implies
\begin{align}
\frac{1}{2}\EE\Big[ \sup_{0 \leq s \leq t} \mathcal{I}^p(u(s))\Big]
&\leq \EE[\mathcal{I}^p(u(0))]
+ (\gamma \,p+ \frac{ C_p \gamma_1^{\frac{p}{2}}}{2})t^{p-1}\EE\Big[ \int_0^t \mathcal{I}^p(u(s))\,ds\Big].
\end{align}
Using the Gronwall inequality we obtain
\begin{align}\label{eqn uL1}
\EE\Big[ \sup_{0 \leq s \leq t} \mathcal{I}^p(u(s))\Big]
&\leq \EE[   \mathcal{I}^p(u(0))]e^{(\gamma \,p+ \frac{ C_p \gamma_1^{\frac{p}{2}} }{2})T^{p}}.
\end{align}
Since,  $u$, the cell density of the chemotaxis system is non-negative, 
$$ u(x,t) \geq 0 \,\, \mbox{for  a.e.\,\,}x \in \CO, \,\, \mbox{for all}\,\,t \in [0,T], \,\,  \PP-a.s.$$ therefore, we note that $\mathcal I(u)$ coincides with the $L^1$ norm of $u$. Therefore, one can conclude from \eqref{eqn uL1} the following inequality holds
\begin{align}\label{eqn uL1.1}
\EE\Big[ \sup_{0 \leq s \leq t} |u(s)|^p_{L^1}\Big]
&\leq \EE[|u_0|^p_{L^1}]e^{(\gamma \,p+ \frac{ C_p \gamma_1^{\frac{p}{2}} }{2})T^{p}}.
\end{align}
\item \newcommand{\trace}{aa}
We will apply the It\^o formula to the functions
\begin{align}\label{def phi psi}
\phi(w)=\tfrac{1}{2}|w|_{L^2}^2, \,\, w\in L^2(\CO) \quad  \hbox{and} \quad \psi(w)=\tfrac{1}{2}|\nabla w|_{L^2}^2, \,\,w\in H^1_4(\CO)
\end{align}
to the processes $u(s)$ and $v(s)$ for $s \in [0, T]$ respectively.
A straight application of the It\^o formula, however, would require a regularization of the unbounded operators appearing in equation \eqref{chemonoise}.
Nevertheless, one can overcome this problem by the following standard procedure:
We replace the unbounded operator $A:=\Delta$ in equation \eqref{chemonoise} is by its Yosida approximation $A_{\eps}:=\frac 1 \ep (\id-(\eps \Delta- \id)^{-1})$ and the divergence operator by intertwining a mollifier $\phi_\ep$. Thereupon, we let the limit $\ep \to 0^{+}$ (see e.g.\ proof of the  Lemma 3.2 in \cite{marinelli}). Here, for any $\nu \geq 2$ and $\delta\in\RR$, we infer that
\[
 |\DeltaA-\DeltaA_\ep|_{\mathcal{L}(H^{\delta+\nu},H^{\delta})}\to 0 \quad \hbox{as} \quad \ep\to 0. \] To approximate the divergence operator, we now choose a compactly supported function $\varphi\in C^{(2)}_b(\RR^2)$  such that
  $$ \int_{\RR^2}\varphi (x)\mathrm {d} x=1, \quad
  \hbox{and}\quad \lim _{\epsilon \to 0}\varphi _{\epsilon }(x)=\lim _{\epsilon \to 0}\epsilon ^{-2}
\varphi (x/\epsilon )=\delta_0 \quad \hbox{(in distribution)}.$$
Let $\mbox{div}_\ep(w):= \mbox{div}(\varphi_\ep\star w)$ where $\star$ denotes the convolution.
We consider the following system of equations
\[
\begin{cases}
\hspace{1.4cm}
du_{\eps} (t)&=\Big( r_u\Delta u_{\ep} (t)-\chi \di_{\eps}(u_{\eps}(t) \nabla v_{\eps}(t) )\Big)dt
 +  u_{\eps} (t)\, d \mathcal W_1(t)+\gamma u_{\eps}(t) dt
\\
\hspace{1.4cm}
d v_{\eps} (t)&=\Big( r_v \Delta v_{\eps} (t)+\beta u_{\eps} (t)
-\alpha v_{\ep} (t)\Big)dt +  v_{\eps} (t)\, d \mathcal W_2(t)
\\
\hspace{0.4cm} ( u_{\eps}(0),  v_{\ep}(0)) &= (u_0,v_0)
.
\end{cases}
\]

Before applying the It\^o-formula we see to it that the integrands belong to the appropriate space (see  \cite[Chapter 4.4]{pratozab}). Thereupon, one can directly apply the It\^o formula to the functions $\phi$ and $\psi$ defined in \eqref{def phi psi} for the processes $(u_{\eps}(t),v_{\eps}(t))$ for $t \in [0,T]$.
In addition, by the definition of $\gamma_1$  
 we know
$$\mbox{Tr}\big[\phi_{xx}(u)(uQ^\frac 12)(uQ^\frac 12)^T\big]\le \gamma_1^2 |u|_{L^2}^2.
$$
Again, let us remind the abbreviation (see \eqref{defofgamma})
$\gamma_j:= \sum_{k \in \NN} |\lambda_k^j|^2 |\psi_k|_{L^\infty}^2;\,\,j=1,2$.
Let us begin with the following
\begin{align}\label{upart}
& \frac{1}{2} |u (t)|_{L^2}^2 - \frac{1}{2} |u_0|_{L^2}^2+r_u \int_0^t |\nabla u (s)|_{L^2}^2 ds
\nonumber
\\
&=\chi \int_0^ t \langle \nabla u (r), u (r) \nabla v (r) \rangle dr
+ \int_0^ t  \langle u (r) , u (r)\,d \mathcal W_1(r)\rangle
\nonumber\\&{}
+\frac{1}{2}\int_0^t \mbox{Tr}\big[\phi_{xx}(u(r))(u(r)Q^\frac 12)(u(r)Q^\frac 12)^T\big]\, ds
+\int_0^t \la u(r),\gamma u(r)\ra \, dr
 \notag\\
& \leq \ep \int_0^ t |\nabla u (r)|_{L^2}^2 dr
+\frac{\chi^2}{4\ep } \int_0^ t  |u (s)\nabla v (s)|_{L^2}^2 ds
 +(\gamma+ \frac{ \gamma_1^2}{2}) \int_0^t |u (s)|_{L^2}^2 ds
\nonumber\\
&\qquad {}+ \int_0^t  \sum_{k=1}^\infty \lambda^{(1)}_k \langle  u (s),u (s)\psi_k \rangle d\beta_{k}^{(1)}(s),
\end{align}
for $t \in[0,T]$. The Burkholder-Davis-Gundy inequality gives 
\begin{align*}
   & \EE \Big[\int_0^t  \sum_{k=1}^\infty \lambda^{(1)}_k \langle  u (s),u (s)\psi_k \rangle d\beta_{k}^{(1)}(s)\Big]
\le\EE \lk[ \int_0^t  \sum_{k=1}^\infty (\lambda _k^{(1)})^2 |\langle  u (s),u (s)\psi_k \rangle |^2 ds\rk]^\frac 12 
\notag
\\
&
\le \gamma_1\, \EE \lk[ \int_0^t  |u (s)|_{L^2}^4 ds\rk]^\frac 12
\le \gamma_1\, \EE \lk[\lk( \sup_{0\le s\le t}   |u (s)|^2_{L^2} \rk)^\frac 12 \lk( \int_0^t  |u (s)|_{L^2}^2 ds\rk)^\frac 12\rk].
\end{align*}
The Cauchy-Schwarz and the Young inequality gives that for any $\ep_1>0$ there exists a constant $C(\ep_1)>0$ such that
\begin{align}\label{unoise}
   & \EE\Big[\int_0^t  \sum_{k=1}^\infty \lambda_k^{(1)} \langle  u (s),u (s)\psi_k \rangle d\beta_{k}^{(1)}(s)
   \Big]
\le\ep_1 \EE \Big[ \sup_{0\le s\le t}   |u (s)|^2_{L^2}\Big] +C(\ep_1)\EE \Big[ \int_0^t  |u (s)|_{L^2}^2\, ds \Big].
\end{align}
Similarly, by applying the It\^o-formula to $[0,T]\ni t\mapsto \la \nabla v(t),\nabla v(t)\ra$  we obtain 
\begin{align} \label{ito v l4 old}
& \frac{1}{2} |\nabla v (t)|_{L^2}^2-\frac{1}{2} |\nabla v_0|_{L^2}^2
 \leq -r_v \int_0^t |\Delta v(s)|_{L^2}^2 ds
- \alpha \int_0^t |\nabla v(s)|_{L^2}^2 \,ds
\\\nonumber
& \quad
+ \beta \int_0^t \langle \nabla u(s) , \nabla  v(s) \rangle \,ds
+\int_0^t  \sum_{k=1}^\infty \lambda_k^{(2)} \Big \langle  \nabla v (s),\nabla (v (s)\psi_k) \Big\rangle d\beta_{k}^{(2)}(s)
\\\nonumber
& \quad
 +\frac{\gamma_2^2}{2}\int_0^t |\nabla v(s)|_{L^2}^2 \, ds.
\end{align}
Due to the Young inequality, for any $\ep_2>0$ we have 
 \begin{align} \label{ito v l4 old}
& \frac{1}{2} |\nabla v (t)|_{L^2}^2
+r_v \int_0^t |\Delta v(s)|_{L^2}^2 ds
+ \alpha \int_0^t |\nabla v(s)|_{L^2}^2 \,ds
\\\nonumber
& \quad
 \leq \frac{1}{2} |\nabla v_0|_{L^2}^2
 +\frac {\beta^2}{2\ep_2} \int_0^t |\nabla v(s)|_{L^2}^2 ds
+ \ep_2  \int_0^t |\nabla u(s)|_{L^2}^2 \, ds
 +\frac{\gamma_2^2}{2}\int_0^t |\nabla v(s)|_{L^2}^2 \, ds
 \\\nonumber
& \quad
{} + \int_0^t  \sum_{k=1}^\infty \lambda_k^{(2)} \Big\langle  \nabla v (s),\nabla (v (s)\psi_k) \Big\rangle d\beta_{k}^{(2)}(s)
. 
\end{align}
%
Using the Burkholder-Davis-Gundy inequality, we obtain
\begin{align*}
   & \EE\Big[\sup_{0 \leq s \leq t} \Big|\int_0^t  \sum_{k=1}^\infty \lambda_k^{(2)} \langle  \nabla v (s),\nabla (v (s)\psi_k) \rangle d\beta_{k}^{(2)}(s) \Big|    \Big]
\leq C\, \EE\Big[\int_0^t  \sum_{k=1}^{\infty} (\lambda^{(2)}_k)^2
\Big|  \langle  \nabla v (s),\nabla (v (s)\psi_k) \rangle \Big|^2 \, ds \Big]^{\frac{1}{2}}.
\end{align*}
The H\"older inequality gives
\begin{align*}
&\EE\lk[\sup_{0 \leq s \leq t} \lk|\int_0^t  \sum_{k=1}^\infty \lambda_k^{(2)} \langle  \nabla v (s),\nabla (v (s)\psi_k) \rangle d\beta_{k}^{(2)}(s) \rk|    \rk]
\\
& \leq C \, 
\EE\lk[\int_0^t  \lk(\sum_{k=1}^{\infty} \lambda^{(2)}_k |\psi_k|_{L^\infty}^2\rk)
 |\nabla v(s)|^4_{L^2}  \, ds \rk]^{\frac{1}{2}}.
\end{align*}
Applying the H\"older inequality, and then the Young inequality gives that for all $\ep_3>0$ there exists a constant $C(\ep_3)>0$ such that
\begin{align}\label{wienernoisepart}
&\EE\lk[\sup_{0 \leq s \leq t} \lk|\int_0^t  \sum_{k=1}^\infty \lambda_k^{(2)} \langle  \nabla v (s),\nabla (v (s)\psi_k) \rangle d\beta_{k}^{(2)}(s) \rk|    \rk]
\notag
\\
& \leq C \,
\EE\lk[\sup_{0\le s\le t} |\nabla v(s)|^2_{L^2} \, \int_0^t  \lk(\sum_{k=1}^{\infty} \lambda^{(2)}_k |\psi_k|_{L^\infty}^2\rk)
 |\nabla v(s)|^2_{L^2}  \, ds \rk]^{\frac{1}{2}}
\notag
\\
& \leq C \,
\EE\lk[\lk( \sup_{0\le s\le t} |\nabla v(s)|^2_{L^2} \rk)^\frac 12 \,\lk( \int_0^t 
 |\nabla v(s)|^2_{L^2}  \, ds \rk)^{\frac{1}{2}}\rk]
\notag
\\
& \leq C \,{\gamma_2} \lk[\EE\sup_{0\le s\le t} |\nabla v(s)|^2_{L^2} \rk]^\frac 12 \,\lk[ \EE \int_0^t
 |\nabla v(s)|^2_{L^2}  \, ds \rk]^{\frac{1}{2}}
\notag
\\
& \leq \ep_3\, \EE\Big[\sup_{0\le s\le t} |\nabla v(s)|^2_{L^2}\Big] +C(\ep_3)\,C \,\gamma_2\, \EE\Big[ \int_0^t
 |\nabla v(s)|^2_{L^2}  \, ds\Big].
\end{align}  
%
%
%
Now, let us consider the term $ | u (s) \nabla v (s)|^2_{L^2}$ in \eqref{ito v l4 old}.
Using the following interpolation inequality (see \cite[p.\ 233]{Brezis})
\[
|\cdot|_{L^q} \leq C_{p,q} |\cdot|_{H^1_2}^\theta |\cdot|_{L^p}^{1 - \theta}  \quad \hbox{for} \quad 1 \leq p< q \leq \infty, \,\, \theta=\frac{\frac{1}{p} - \frac{1}{q}}{\frac{1}{p} + \frac{1}{2}}=\frac 12,
\] 
for once $q=4,\,\,p=1$ and once  $q=4,\,\,p=2$, we achieve
\begin{align}\label{u nab v}
| u (s) \nabla v (s)|^2_{L^2} &= | u (s)|_{L^4}^2 | \nabla v (s)|^2_{L^4}
\leq 
 C |u(s)|_{H^1_2} |u(s)|_{L^1} |\nabla v(s)|_{H^1_2}^{\frac{1}{2}} |\nabla v(s)|_{L^2}^{\frac{3}{2}}
 \notag
\\
&\le C |u(s)|_{H^1_2} |u(s)|_{L^1} |v(s)|_{H^2_2}^{\frac{1}{2}} |v(s)|_{H^1_2}^{\frac{3}{2}}.
\end{align}
%
The Young inequality gives that for any $\ep_4>0$ there exists constant $C(\ep_4)>0$ such that
\begin{align}\label{vuterm}
& \leq \eps_4 \Big(  |u(s)|_{H^1_2}^2 + |v(s)|^2_{H^2_2}  \Big) + C({\eps_4}) |u(s)|_{L^1}^4 |v(s)|_{H^1_2}^6
\notag\\
& \leq \eps_4 \Big(  |u(s)|_{H^1_2}^2 + |v(s)|^2_{H^2_2}  \Big) + C({\eps_4}) |u(s)|_{L^1}^8
+ C({\eps_4}) |v(s)|_{H^1_2}^{12}
\end{align}
The main difficulty is to handle the non-linear term $u \nabla v$.
 To estimate the non-linear term which is $\EE\Big[\sup_{0 \leq s \leq T}|u(s) \nabla v(s) |^2_{L^2}\Big]$, we need to obtain bound for $\EE\Big[ \sup_{0 \leq s \leq t} |u(s)|^8_{L^1}\Big]$ and $\EE\Big[ \int_0^t |v(s)|^{12}_{H^1_2}\,ds\Big]$. 
We now apply Theorem 4.5 in \cite{vanNeerven1}, for $p>1$
with $X_0=(H^{1}_2(\CO))^{\ast}$ (the dual space of ${H}_{2}^{1}(\CO)$
corresponding to Neumann boundary conditions), $X_1=H^{1}_2(\CO)$,
and $(X_0,X_1)_{1-\frac{1}{p},p}=B_{2,p}^{1-\frac{2}{p}}$. Hence, using the embedding $L^1(\CO) \hookrightarrow (H^{1}_2(\CO))^{\ast}$ in one dimension, and using \eqref{eqn uL1} we obtain
\begin{align}\label{esti v van}
 \EE\Big[\int_0^t |v(s)|_{H^1_2}^p \,ds   \Big]
 &\leq C\Big(\EE|v_0|^p _{B_{2,p}^{1-\frac{2}{p}}}
+ \EE\Big[\int_0^T |u(s)|_{(H^1_2)^\ast}^p\,ds    \Big]      \Big)
\notag\\
&\leq  C\Big(\EE|v_0|^p _{B_{2,p}^{1-\frac{2}{p}}}
+ \EE\Big[\int_0^T |u(s)|_{L^1}^p\,ds    \Big]      \Big)
\notag\\
&
\leq  C\Big(\EE|v_0|^p_{B_{2,p}^{1-\frac{2}{p}}}
+T\, \EE|u_0|_{L^1}^p e^{(\gamma p + \frac{1}{2})T^p}\Big).
\end{align}
We set $\ep_1<\frac{1}{4}$, $\eps_2<\frac{r_u}{8}$, $\eps_3< \frac{1}{4}$,
$\eps_4<\{\frac{r_u}{8} \wedge \frac{r_v}{4}\}$.
Taking supremum and then expectation in \eqref{upart}, \eqref{ito v l4 old},  and \eqref{vuterm} and combining \eqref{unoise}, \eqref{wienernoisepart}, and \eqref{esti v van} we finally arrive at the following estimate
\begin{align}
& \frac 14 \,\EE \Big[\sup_{0 \leq s \leq t  }|u(s)|^2_{L^{2}} \Big]
+ \frac 14 \,\EE \Big[\sup_{0 \leq s \leq t }|\nabla v(s)|^2_{L^{2}} \Big]
 + \frac{r_u}{4}\,\EE\Big[ \int_0^{t }|\nabla u(s)|^2_{L^{2}}\, ds \Big]
\notag\\
& \quad
 + \frac{r_v}{2}\, \EE\Big[ \int_0^ {t }\int_{\CO} |\Delta v(s,x)|^2\,dx\, ds\Big]
  + \alpha \EE\Big[ \int_0^ {t }\int_{\CO} |\nabla v(s,x)|^2\,dx\, ds\Big]
 \notag\\
&
 \leq C \Big(\frac 12 \EE|u_0|_{L^2}^2
+\frac{1}{4} \EE|\nabla v_0|_{L^2}^2
 + T\, \EE|u_0|_{L^1}^8 \, e^{(\gamma^8 + \frac{1}{2}) T^8}
 + T \EE|v_0|^{12}_{B_{2,12}^{\frac{5}{6}}}
 \notag\\
& \quad + T^2 \EE |v_0|_{L^1}^{12} \, e^{(\gamma^{12} + \frac{1}{2}) T^{12}}\Big)\, e^{(\gamma +\frac{\gamma_1^2}{2}
+ \frac{\gamma_2^2}{2}  + \frac{\beta}{r_v}) T}.
\end{align}
 %
\del{Let us remind, that in  \cite{EH+DM+TT_2019} we have showed an upper bound of the functional
\DEQS 
\CF(u,v)=
W (u,v)= \int_\CO \Big(u(x) \log u(x) - \rho u(x)v(x)\Big) dx +
C_1|\nabla v|^2_{L^2} + C_2|v|_{L^2}^2,\quad u\in \LLa(\CO),\,\, v\in H^1(\CO).
\EEQS}

\end{steps}

This completes the proof.
\end{proof}

\appendix
\section{Technical lemmata}\label{TL}

%

\begin{lemma}\label{lem:gajewski}
$\phi(x)=x(\ln(x)-1)$, $x\ge 0$, satisfies
\DEQS
\phi(x)-2\phi\lk( \frac {x+y}2\rk) +\phi(y)\ge \frac 14 \lk(\sqrt{x}-\sqrt{y}\rk)^2,\quad x,y\ge 0.
\EEQS
\end{lemma}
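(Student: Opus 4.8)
The plan is to exploit a hidden homogeneity of the left-hand side. Although $\phi$ itself is not homogeneous, a direct computation using $\phi(x)=x\ln x-x$ (with $\phi(0)=0$ understood as the limit $\lim_{x\to 0^+}x(\ln x-1)=0$) shows that the second difference collapses to
$$F(x,y):=\phi(x)+\phi(y)-2\phi\Big(\frac{x+y}{2}\Big)=x\ln x+y\ln y-(x+y)\ln\Big(\frac{x+y}{2}\Big),$$
which is positively homogeneous of degree $1$. The right-hand side $G(x,y):=\frac14(\sqrt{x}-\sqrt{y})^2$ is likewise homogeneous of degree $1$. Hence, after disposing of the trivial case $x=y=0$, I would normalize by $m:=\frac{x+y}{2}>0$ and write $x=1+s$, $y=1-s$ with $s\in[-1,1]$; both sides scale by the common factor $m$, so the claim is equivalent to the single-variable inequality obtained on this line.

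Next I would compute the two sides along this line. Since $\phi(1)=-1$, one finds $F(1+s,1-s)=(1+s)\ln(1+s)+(1-s)\ln(1-s)=:f(s)$, while $G(1+s,1-s)=\frac12\big(1-\sqrt{1-s^2}\big)$. As both expressions are even and vanish at $s=0$, it suffices to treat $s\in[0,1]$, and the target reduces to $f(s)\ge\frac12(1-\sqrt{1-s^2})$.

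Then I would split this into two elementary estimates that decouple the logarithm from the square root. The key intermediate bound is $f(s)\ge s^2$: indeed $f(0)=0$, $f'(s)=\ln\frac{1+s}{1-s}$ so $f'(0)=0$, and $f''(s)=\frac{2}{1-s^2}\ge 2$, whence $f(s)-s^2$ has vanishing value and derivative at $0$ together with nonnegative second derivative $\frac{2s^2}{1-s^2}$, so it is nonnegative. It then remains to verify $s^2\ge\frac12(1-\sqrt{1-s^2})$ on $[0,1]$; writing $u=s^2\in[0,1]$ this reads $\sqrt{1-u}\ge 1-2u$, which is immediate when $u\ge\frac12$ and, for $u<\frac12$, follows by squaring since it reduces to $4u^2\le 3u$, i.e. $u\le\frac34$. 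Chaining $f(s)\ge s^2\ge\frac12(1-\sqrt{1-s^2})$ closes the argument.

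I expect the only genuinely non-mechanical step to be the first one: recognizing that the individually non-homogeneous summands combine into a degree-one homogeneous function, which is exactly what permits the reduction to one variable. Once that reduction is in hand, the auxiliary bound $f(s)\ge s^2$ is the natural device for separating the transcendental left side from the algebraic right side, and everything that remains is routine calculus.
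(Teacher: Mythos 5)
Your proof is correct, and it differs from the paper in a substantive way: the paper does not prove the lemma at all, but simply defers to \cite[Lemma 5.1]{Gajewski_1994}, whereas you give a complete, self-contained elementary argument. Each step of yours checks out: the second difference indeed collapses to $F(x,y)=x\ln x+y\ln y-(x+y)\ln\bigl(\tfrac{x+y}{2}\bigr)$ because the linear parts of $\phi$ cancel, and this is positively homogeneous of degree one, as is $\tfrac14(\sqrt{x}-\sqrt{y})^2$; normalizing by $m=\tfrac{x+y}{2}$ and writing $x=m(1+s)$, $y=m(1-s)$ legitimately reduces the claim to $f(s):=(1+s)\ln(1+s)+(1-s)\ln(1-s)\ge\tfrac12\bigl(1-\sqrt{1-s^2}\bigr)$ on $[0,1]$ (with the convention $0\ln 0=0$ handling the endpoint $s=1$ and the case $\{x=0\}\cup\{y=0\}$); the intermediate bound $f(s)\ge s^2$ follows from $f(0)=f'(0)=0$ and $f''(s)=\tfrac{2}{1-s^2}\ge 2$; and the algebraic inequality $\sqrt{1-u}\ge 1-2u$ is verified correctly by the case split at $u=\tfrac12$. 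What your approach buys is that the lemma becomes verifiable within the paper itself rather than resting on an external reference, and it isolates the one genuinely structural fact (the hidden homogeneity of the second difference of $\phi$) from routine calculus; the paper's citation buys only brevity. A minor remark: the last two steps can be merged, since $\sqrt{1-u}\ge 1-u$ for $u\in[0,1]$ already gives $\tfrac12\bigl(1-\sqrt{1-s^2}\bigr)\le\tfrac12 s^2\le s^2$, so no case analysis or squaring is actually needed.
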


\begin{proof}
See e.g. \cite[Lemma 5.1]{Gajewski_1994} for more details.
\end{proof}

To evaluate $S_2$ in the proof of Theorem \ref{thm path uniq}, we formulate the following  Claim.

\begin{claim}\label{claim nonneg}
Let $f:[1,\infty) \rightarrow \mathbb{R}$ be a function given by
\begin{align}\label{def f}
f(u)= \ln \lk(\dfrac{2}{1+u} \rk) + u\ln \lk(\dfrac{2u}{1+u} \rk)- (\sqrt{u}-1)^2, \quad u \in [1,\infty).
\end{align}
Then,  $f(u) \leq f(1)=0 \quad \hbox{for} \quad u \in [1,\infty)$.
\end{claim}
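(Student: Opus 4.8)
The plan is to prove the claim by a standard monotonicity argument: I will show that $f$ is non-increasing on $[1,\infty)$ and that $f(1)=0$, which immediately yields $f(u)\le f(1)=0$ for all $u\ge 1$. The value $f(1)=0$ is checked directly, since each of the three terms in \eqref{def f} vanishes at $u=1$.

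First I would compute $f'$. Expanding the logarithms in \eqref{def f} as $f(u)=\ln 2-\ln(1+u)+u\ln 2+u\ln u-u\ln(1+u)-(u-2\sqrt u+1)$ and differentiating term by term, the two rational contributions combine as $-\tfrac{1}{1+u}-\tfrac{u}{1+u}=-1$, and one obtains the clean expression
\[
f'(u)=\ln\!\left(\frac{2u}{1+u}\right)+\frac{1}{\sqrt u}-1.
\]
In particular $f'(1)=\ln 1+1-1=0$, so a crude bound on $f'$ will not suffice: the sign of $f'$ must be controlled precisely, since it degenerates exactly at the endpoint $u=1$.

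The key step is to examine the second derivative. Differentiating once more gives $f''(u)=\tfrac{1}{u(1+u)}-\tfrac12 u^{-3/2}$, and the point is that this factors: multiplying by the positive quantity $2u^{3/2}$ yields
\[
2u^{3/2}f''(u)=\frac{2\sqrt u}{1+u}-1=-\frac{(\sqrt u-1)^2}{1+u}\le 0.
\]
Hence $f''(u)\le 0$ for every $u\ge 1$, so $f'$ is non-increasing on $[1,\infty)$. Combined with $f'(1)=0$, this forces $f'(u)\le 0$ throughout $[1,\infty)$, whence $f$ is itself non-increasing there. Since $f(1)=0$, we conclude $f(u)\le 0$ for all $u\in[1,\infty)$, as claimed.

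The argument is elementary once the right quantity is differentiated; the only genuine obstacle is the bookkeeping in the derivative computations, and in particular recognizing the factorization $2\sqrt u-(1+u)=-(\sqrt u-1)^2$ that makes $f''\le 0$ transparent. Everything else is routine calculus, so I do not anticipate any real difficulty beyond keeping the algebra straight.
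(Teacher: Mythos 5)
Your proposal is correct and follows essentially the same route as the paper's own proof: compute $f'(u)=\ln\!\left(\tfrac{2u}{1+u}\right)+\tfrac{1}{\sqrt u}-1$, show $f''(u)=-\tfrac{(\sqrt u-1)^2}{2u^{3/2}(1+u)}\le 0$ via the same factorization, and conclude $f'\le f'(1)=0$ and hence $f\le f(1)=0$ by monotonicity. No discrepancies to report.
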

\begin{proof}[Proof of Claim \ref{claim nonneg}]
Let us first evaluate the first two derivatives of $f$. On differentiating $f(u)$ and $f'(u)$ with respect to $u$, we get
\begin{align*}
f'(u)= \ln \lk(\dfrac{2u}{1+u} \rk) + \frac{1}{\sqrt{u}} -1, \quad \quad
f''(u)= \frac{2\sqrt{u} - (1+ u)}{2u^{3/2}(1+u)}
= \frac{-(\sqrt{u}-1)^2}{2 u^{3/2}(1+u)} \leq 0, \quad \hbox{for} \quad u \in [1,\infty).
\end{align*}
This implies that $f'$ is a decreasing function on $[1,\infty)$, i.e.\
$f'(u) \leq f'(1)=0 \,\, \hbox{for} \,\, u \in [1,\infty)$. This again yields that
$f(u) \leq f(1)=0 \,\, \hbox{for} \,\, u \in [1,\infty)$. This completes the proof of the claim.

\end{proof}


\end{document}